\newtheorem{thm}{Theorem}[section]
\newtheorem{cor}[thm]{Corollary}
\newtheorem{lem}[thm]{Lemma}
\newtheorem{prop}[thm]{Proposition}
\newtheorem{defn}{Definition}[section]
\newtheorem{rem}{Remark}[section]
\newtheorem{example}{Example}[section]
\newtheorem{quest}{Question}[section]
\begin{document}

\title{On the Generalized Fitting Height and Nonsoluble Length of the Mutually Permutable Products of Finite Groups\footnote{Supported by BFFR $\Phi23\textrm{PH}\Phi\textrm{-}237$}}

\author{Viachaslau I. Murashka${}^{1,2}$ and Alexander F. Vasil'ev${}^{3,2}$}
\date{}
\footnotetext[1]{email: mvimath@yandex.ru}
\footnotetext[2]{Francisk Skorina Gomel State University, Gomel, Belarus}
\footnotetext[3]{email: formation56@mail.ru}

 \maketitle

\begin{abstract}
The generalized Fitting height $h^*(G)$ of a finite group $G$ is the least number $h$ such that $\mathrm{F}_h^* (G) = G$, where $\mathrm{F}_{(0)}^* (G) = 1$, and $\mathrm{F}_{(i+1)}^*(G)$ is the inverse image of the generalized Fitting subgroup $\mathrm{F}^*(G/\mathrm{F}^*_{(i)} (G))$.
Let $p$ be a prime, $1=G_0\leq G_1\leq\dots\leq G_{2h+1}=G$ be the shortest normal series in which for $i$ odd the factor $G_{i+1}/G_i$ is $p$-soluble (possibly trivial),
and for $i$ even the factor $G_{i+1}/G_i$ is a (non-empty) direct product of nonabelian simple
groups. Then $h=\lambda_p(G)$ is called the non-$p$-soluble length of a group $G$.
We proved that if a finite group $G$ is a mutually permutable product of of subgroups $A$ and $B$ then
 $\max\{h^*(A), h^*(B)\}\leq h^*(G)\leq \max\{h^*(A), h^*(B)\}+1$ and $\max\{\lambda_p(A), \lambda_p(B)\}= \lambda_p(G)$. Also we introduced and studied the non-Frattini length.

Keywords: Finite group; generalized Fitting subgroup; mutually permutable product of groups;
generalized Fitting height; non-$p$-soluble length; Plotkin radical.
\end{abstract}

\section{Introduction and the Main Results}

All groups considered here are finite. E.I. Khukhro and P. Shumyatsky introduced and studied interesting invariants of a group: the generalized Fitting height   and the nonsoluble length \cite{Khukhro2015a, KHUKHRO2015, Khukhro2015b}. The first one is the extension of the well known Fitting height to the class of all groups and the second one implicitly appeared in \cite{Hall1956, Wilson1983}.

\begin{defn}[Khukhro, Shumyatsky]
  $(1)$ The generalized Fitting height $h^*(G)$ of a finite group $G$ is the least number $h$ such that $\mathrm{F}_h^* (G) = G$, where $\mathrm{F}_{(0)}^* (G) = 1$, and $\mathrm{F}_{(i+1)}^*(G)$ is the inverse image of the generalized Fitting subgroup $\mathrm{F}^*(G/\mathrm{F}^*_{(i)} (G))$.

  $(2)$  Let $p$ be a prime, $1=G_0\leq G_1\leq\dots\leq G_{2h+1}=G$ be the shortest normal series in which for $i$ odd the factor $G_{i+1}/G_i$ is $p$-soluble $($possibly trivial$)$,
and for $i$ even the factor $G_{i+1}/G_i$ is a $($non-empty$)$ direct product of nonabelian simple
groups. Then $h=\lambda_p(G)$ is called the non-$p$-soluble length of a group $G$.

$(3)$  Recall that $\lambda_2(G)=\lambda(G)$ is the nonsoluble length of a group $G$.
\end{defn}

  In \cite{Khukhro2015b} E.I. Khukhro and P. Shumyatsky  showed that in the general case the generalized Fitting height of a factorized group is not bounded in terms  of the generalized Fitting heights of factors. The same situation is also for the nonsoluble length.

  Recall \cite[Definition 4.1.1]{PFG} that a group $G$ is called a mutually permutable product of its subgroups $A$ and $B$ if $G=AB$,  $A$ permutes with every subgroup of $B$ and    $B$ permutes with every subgroup of $A$. The products of mutually permutable subgroups is the very interesting topic of the theory of groups (for example, see \cite[Chapter 4]{PFG}).

  The main result of our paper is

\begin{thm}\label{Main1}
  Let a group $G$ be the product of the mutually permutable subgroups $A$ and $B$. Then

  $(1) $ $\max\{h^*(A), h^*(B)\}\leq h^*(G)\leq \max\{h^*(A), h^*(B)\}+1$.

  $(2)$  $\max\{\lambda_p(A), \lambda_p(B)\}= \lambda_p(G)$ for any prime $p$. In particular,  $\max\{\lambda(A), \lambda(B)\}= \lambda(G)$.
\end{thm}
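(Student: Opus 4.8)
\medskip
\noindent\emph{Proof strategy.}
The plan is to prove the two lower bounds and the two upper bounds separately, in each case by induction on $|G|$. Two routine tools will be used throughout. First, if $N\trianglelefteq G$ then $G/N$ is the mutually permutable product of $AN/N$ and $BN/N$, so the class of situations under consideration is closed under quotients. Second, both invariants are sub‑additive along a normal section, i.e.\ $h^*(H)\le h^*(M)+h^*(H/M)$ and $\lambda_p(H)\le\lambda_p(M)+\lambda_p(H/M)$ for $M\trianglelefteq H$, together with the sharpenings $h^*(G)=h^*(G/\mathrm{F}^*(G))+1$ for $G\ne1$, and $\lambda_p(G)=\lambda_p(G/\mathrm R)$ where $\mathrm R$ is the largest normal $p$‑soluble subgroup of $G$, with $\lambda_p(G)=\lambda_p(G/\mathrm{Soc}(G))+1$ when $\mathrm R=1$ (in which case $\mathrm{Soc}(G)$ is a non‑empty direct product of non‑abelian simple groups). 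The genuinely group‑theoretic input is the structural lemma for mutually permutable products: \emph{a non‑abelian minimal normal subgroup of $G=AB$ is contained in $A$ or in $B$}, together with its consequences about how components of a factor descend into $\mathrm{Soc}(G)$; I would take these from the theory of mutually permutable products (e.g.\ the monograph of Ballester‑Bolinches, Esteban‑Romero and Asaad).

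For the lower bound $\max\{h^*(A),h^*(B)\}\le h^*(G)$, put $F=\mathrm{F}^*(G)\ne 1$. The point where mutual permutability is already needed is the claim that $A\cap F$ is quasinilpotent (equivalently $A\cap F\le\mathrm{F}^*(A)$); this cannot be deduced from subgroup monotonicity, since $h^*$ is \emph{not} monotone under arbitrary subgroups (for instance $h^*(\mathrm{SL}_2(3))=2>1=h^*(\mathrm{SL}_2(5))$), but it holds for factors of mutually permutable products, because $A\cap\mathrm{F}(G)\le\mathrm{F}(A)$ while the semisimple part $A\cap E(G)$, a normal subgroup of $A$ lying inside the layer $E(G)$, is forced to be assembled from components of $G$ that also lie in $A$, and such components are components of $A$. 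Granting this, $AF/F$ is a factor of the mutually permutable product $G/F$, so by induction $h^*(AF/F)\le h^*(G/F)=h^*(G)-1$, whence $h^*(A)\le h^*(A\cap F)+h^*(AF/F)\le 1+(h^*(G)-1)=h^*(G)$, and symmetrically for $B$. The inequality $\lambda_p(A)\le\lambda_p(G)$ runs along the same lines, but is easier: ``$p$‑soluble'' is inherited by subgroups, so $A\cap\mathrm R$ is automatically $p$‑soluble and no structural lemma is needed here; in the case $\mathrm R=1$ one factors out a minimal normal subgroup, which the structural lemma places inside $A$ or $B$, and concludes.

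For the upper bounds I would first settle the base case $\max\{\lambda_p(A),\lambda_p(B)\}=0$ of (2): a mutually permutable product of two $p$‑soluble groups is $p$‑soluble. Reducing to a unique minimal normal subgroup $N$ and factoring out abelian $N$, one reaches the case of a non‑abelian $N$; the structural lemma then puts $N$ inside one of the factors, contradicting its $p$‑solubility. With this in hand, for $\max\{\lambda_p(A),\lambda_p(B)\}=m\ge 1$: if $\mathrm R\ne1$, pass to $G/\mathrm R$, where $\lambda_p$ is unchanged and the images of the factors still have $\lambda_p\le m$, and invoke induction; if $\mathrm R=1$, then $\lambda_p(G)=\lambda_p(G/\mathrm{Soc}(G))+1$, and the task is to show that $X\cap\mathrm{Soc}(G)$ absorbs an entire bottom non‑soluble layer of each factor $X$ realizing $m$, so that $\lambda_p\big(X\mathrm{Soc}(G)/\mathrm{Soc}(G)\big)\le m-1$; since $\mathrm{Soc}(G)$ is a product of non‑abelian simple groups and, by the structural lemma iterated, the components of $X$ generating its bottom layer lie inside $\mathrm{Soc}(G)$, this holds, and induction applied to $G/\mathrm{Soc}(G)$ yields $\lambda_p(G/\mathrm{Soc}(G))\le m-1$, hence $\lambda_p(G)\le m$. (The absence of the ``$+1$'' here, in contrast with (1), reflects exactly that the bottom $p$‑soluble layer of a $\lambda_p$‑series may be taken trivial, so attaching a new $p$‑soluble layer at the bottom costs nothing.)

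Finally, for the upper bound $h^*(G)\le h+1$ with $h=\max\{h^*(A),h^*(B)\}$: reducing modulo $F=\mathrm{F}^*(G)$ gives $h^*(G)=h^*(G/F)+1$, so it suffices to prove $h^*(G/F)\le h$. \emph{This is the step I expect to be the main obstacle.} The trouble is that the images $\bar A=AF/F$, $\bar B=BF/F$ need not have strictly smaller generalized Fitting height — already $S_3=C_3\cdot C_2$ with $F=C_3$ has $\bar B\cong C_2$ — so applying the inductive bound blindly to $G/F$ only gives $h^*(G/F)\le\max\{h^*(\bar A),h^*(\bar B)\}+1$, which may equal $h+1$. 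The way around it should be a case analysis of $G/F=\bar A\bar B$: if one of the factors lies in $F$, say $A\le F$, then $G/F\cong B/(B\cap F)$ is a single quotient of $B$ and $h^*(G/F)\le h^*(B)\le h$ at once; if neither factor lies in $F$, one uses the structural lemma to extract a minimal normal subgroup of $G$ contained in a factor $X$ and, tracing the generalized Fitting series of $X$, shows that $X\cap F$ does swallow the bottom quasinilpotent level of every height‑realizing factor $X$ up to a discrepancy of Frattini type that does not affect the height, so that $h^*(\bar X)\le h-1$ and the induction closes, giving $h^*(G)\le h+1$. The technical heart of the argument is precisely this last verification — that intersecting a factor of maximal generalized Fitting height with $\mathrm{F}^*(G)$ genuinely removes one Fitting level — and it is there that the exact form of the structural lemma(s) for mutually permutable products is decisive.
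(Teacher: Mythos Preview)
Your strategy is natural, but the steps you flag as obstacles are genuine gaps, and the paper closes them by a quite different route.

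The unjustified claim underpinning both your lower bound for (1) and your upper bounds is that $A\cap\mathrm F^*(G)$ (respectively $X\cap\mathrm{Soc}(G)$) captures a full bottom layer of the factor. Your reason for $A\cap\mathrm F^*(G)\le\mathrm F^*(A)$---that $A\cap E(G)$ is ``assembled from components of $G$ that also lie in $A$''---does not follow from the structural lemma you quote: that lemma speaks of minimal normal subgroups of $G$, not components, and even when every minimal normal $N$ satisfies $N\cap A\in\{1,N\}$, the intersection $A\cap\prod_j N_j$ can exceed $\prod_j(A\cap N_j)$ by diagonal pieces. Nothing you cite rules out such an intersection being non-quasinilpotent. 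The same issue recurs verbatim in your upper-bound sketches, where you need the bottom layer of a height-realising factor to be swallowed by $\mathrm F^*(G)$ or $\mathrm{Soc}(G)$; you correctly identify this as the technical heart, and it is not proved.

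The paper avoids controlling $A\cap\mathrm F^*(G)$ altogether. Its key structural input is not the minimal-normal-subgroup lemma but Lemma~\ref{MP}(5): $A'$ and $B'$ are \emph{subnormal} in $G$, hence so are the residuals $A^{\mathfrak N^*},B^{\mathfrak N^*}$ (respectively $A^{\mathfrak H_p},B^{\mathfrak H_p}$). Setting $C=\langle A^{\mathfrak N^*},B^{\mathfrak N^*}\rangle^G$, the join-of-subnormals formula (Theorem~\ref{len3}) gives $h^*(C)=\max\{h^*(A),h^*(B)\}-1$, and $G/C$ is a mutually permutable product of \emph{quasinilpotent} groups. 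Everything then rests on a single directly-proved base case: such a product has $h^*\le 2$ (Lemma~\ref{quasi2}), and analogously a mutually permutable product of $\mathfrak H_p$-groups lies in $\mathfrak H_p$ (Lemma~\ref{Hp}). This yields the upper bound and the weak lower bound $\max-1\le h^*(G)$ in one stroke (Lemma~\ref{bound}); the sharp lower bound is then obtained by a separate minimal-counterexample argument that passes to $G/C_G^*(N)$ via the inneriser and Lemma~\ref{lem4}, again never needing $A\cap\mathrm F^*(G)\le\mathrm F^*(A)$.
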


If a group $G$ is soluble, then $h^*(G)=h(G)$ is the Fitting height of a group $G$.

\begin{cor}[\hspace{-0.7pt}\cite{Jabara2019}]\label{cor82}
  If a soluble group $G$ is the product of the mutually permuta\-ble subgroups $A$ and $B$, then $\max\{h(A), h(B)\}\leq h(G)\leq \max\{h(A), h(B)\}+1$.
\end{cor}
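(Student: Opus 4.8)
The statement is the restriction of Theorem~\ref{Main1}(1) to soluble groups, so the plan is to derive it from that theorem. The one point to verify is that $h^*(X)=h(X)$ for every soluble $X$: a soluble group contains no nontrivial quasisimple subgroup, so its layer $\mathrm{E}(X)$ is trivial and hence $\mathrm{F}^*(X)=\mathrm{F}(X)\mathrm{E}(X)=\mathrm{F}(X)$; an easy induction then gives $\mathrm{F}^*_{(i)}(X)=\mathrm{F}_i(X)$ for all $i$ (with $\mathrm{F}_i$ the upper Fitting series), so that $h^*(X)=h(X)$. As $G$ is soluble, so are $A$ and $B$, and Theorem~\ref{Main1}(1) with $h^*$ replaced by $h$ yields $\max\{h(A),h(B)\}\le h(G)\le\max\{h(A),h(B)\}+1$.

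Thus the real work sits in Theorem~\ref{Main1}, which I would prove by induction on $|G|$, using three ingredients: (i) quotients of mutually permutable products are mutually permutable; (ii) $h^*$ and $\lambda_p$ do not grow under homomorphic images, and $h^*(AN/N)=h^*(A/(A\cap N))\le h^*(A)$, likewise for $\lambda_p$; and (iii) the structural theory of mutually permutable products from \cite[Chapter~4]{PFG}, which controls how minimal normal subgroups, components and the generalized Fitting subgroup are positioned with respect to the factors — in particular a nonabelian minimal normal subgroup of $G$ lies in $A$ or in $B$, and $\mathrm{F}^*(G)$ and the socle are factorized compatibly by $A$ and $B$. The lower bounds $\max\{h^*(A),h^*(B)\}\le h^*(G)$ and $\max\{\lambda_p(A),\lambda_p(B)\}\le\lambda_p(G)$ I would obtain by intersecting the canonical ascending series of $G$ (the iterated $\mathrm{F}^*$-series, respectively the alternating ``$p$-soluble radical / socle'' series) termwise with $A$ and with $B$: each intersected factor embeds in the corresponding factor of $G$, so the nilpotent and $p$-soluble layers stay of the right kind automatically, and (iii) is exactly what guarantees that a ``semisimple layer $\cap A$'' is again a direct product of nonabelian simple groups; discarding the factors that have become trivial yields a normal series of $A$ of the prescribed shape and of no greater length.

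For the upper bounds the convenient form is $h^*(G/\mathrm{F}^*(G))\le\max\{h^*(A),h^*(B)\}$, respectively $\lambda_p(G)\le\max\{\lambda_p(A),\lambda_p(B)\}$. Here I would pick a minimal normal subgroup $N$ and pass to $G/N$: if $N$ is abelian it lies in $\mathrm{F}(G)\le\mathrm{F}^*(G)$ and, being abelian and normal in each factor, in $\mathrm{F}^*(A)\cap\mathrm{F}^*(B)$; if $N$ is nonabelian then by (iii) it lies in, say, $A$, and being a direct product of nonabelian simple groups it occupies a single layer of the canonical series of $G$, so it can be absorbed into that layer rather than forcing a new one. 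The base of the induction is that a mutually permutable product of two quasinilpotent groups has $h^*\le 2$ (respectively, of two $p$-soluble groups is $p$-soluble, hence $\lambda_p=0$), which follows directly from (iii). The hard part — and the only genuine use of mutual permutability beyond taking quotients — is the bookkeeping in the induction step: a careless reduction (modding out $\mathrm{F}^*(G)$, or an arbitrary minimal normal subgroup) loses an extra layer, giving $\max\{h^*(A),h^*(B)\}+2$ instead of $+1$, and for $\lambda_p$ it gives $\max+1$ instead of equality; so one must in every configuration choose a $G$-normal section whose removal lowers $\max\{h^*(A),h^*(B)\}$ (respectively $\max\{\lambda_p(A),\lambda_p(B)\}$) by at least as much as it lowers the invariant of $G$, and making this uniform via the compatible factorization of $\mathrm{F}^*(G)$ and the socle is what produces the sharp constant.
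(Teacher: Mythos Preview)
Your derivation of the corollary from Theorem~\ref{Main1}(1) is exactly what the paper does: the sentence immediately preceding the corollary is ``If a group $G$ is soluble, then $h^*(G)=h(G)$'', and the corollary is stated without further proof. Your first paragraph supplies the one-line justification of that identity and is correct.

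Where you diverge from the paper is in the sketch of Theorem~\ref{Main1} itself, and there is a soft spot. For the lower bound $\max\{h^*(A),h^*(B)\}\le h^*(G)$ the paper does \emph{not} intersect the $\mathrm{F}^*$-series of $G$ with $A$. It first gets $\max-1\le h^*(G)\le\max+1$ from a general formation-theoretic lemma (Lemma~\ref{bound}, which exploits that $A'$ and $B'$ are subnormal in $G$ and reduces to the base case Lemma~\ref{quasi2}), and then rules out $\max-1=h^*(G)$ by a minimal-counterexample argument using Lemma~\ref{bound1} and the \emph{inneriser} $C_G^*(N)=NC_G(N)$ of the unique minimal normal subgroup (Lemma~\ref{lem4}). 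The reason your intersection idea is not used is precisely the issue you gloss over with ``(iii)'': quasinilpotency is not inherited by arbitrary subgroups, so a factor $(\mathrm{F}^*_{(i+1)}(G)\cap A)/(\mathrm{F}^*_{(i)}(G)\cap A)$ need not be quasinilpotent, and the results in \cite[Chapter~4]{PFG} do not deliver a blanket ``semisimple layer $\cap\,A$ is semisimple'' statement for \emph{mutually} permutable products. So that route, as written, has a genuine gap; the paper's inneriser/$\mathfrak{X}$-hypercenter argument is what replaces it. Your upper-bound plan and the base case (product of two quasinilpotent subgroups has $h^*\le 2$) do match the paper's Lemma~\ref{quasi2}, though the paper packages the induction via $\mathfrak{F}$-residuals and subnormality (Lemma~\ref{bound}) rather than by stripping off a single minimal normal subgroup.
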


\begin{example}
  Note that the symmetric group $\mathbb{S}_3$ of degree $3$ is the mutually permutable product of the cyclic groups $Z_2$ and $Z_3$  of orders $2$ and $ 3$ respectively.  Hence $h^*(\mathbb{S}_3)=\max\{h^*(Z_2), h^*(Z_3)\}+1=\max\{h(Z_2), h(Z_3)\}+1$.
\end{example}

\section{The Functorial Method}

According to B.I. Plotkin \cite{Plotkin1973} a functorial   is a function $\gamma$ which assigns to each group $G$ its characteristic subgroup $\gamma(G)$   satisfying $f(\gamma(G)) =
\gamma(f(G)) $ for any isomorphism $f: G \to G^*$.

We are interested in functorials with some properties:

$(F1)$ $f(\gamma(G))\subseteq \gamma(f(G))$ for every epimorphism $f: G\to G^*$.

$(F2)$ $\gamma(N)\subseteq \gamma(G)$ for every $N\trianglelefteq G$.

$(F3)$  $\gamma(G)\cap N\subseteq\gamma(N)$ for every $N\trianglelefteq G$.

\begin{rem}\label{rem21}
  $(0)$ Functions $\mathrm{F}^*$ and $R_p$ that assign to every group respectively its the generalized Fitting subgroup and the $p$-soluble radical are examples of functorials. It is well known that they satisfy $(F1), (F2), (F3)$.

  $(1)$ Recall that a functorial $\gamma$ is called a Plotkin radical if it satisfies $(F1)$, idempotent $($i.e. $\gamma(\gamma(G))=\gamma(G))$ and $N\subseteq \gamma(G)$ for every $\gamma(N)=N\trianglelefteq G$ \cite[p. 28]{Gardner2003}.

  $(2)$ A functorial   that satisfies $(F3)$ is often called hereditary $($nevertheless, the same word means different in the theory of classes of groups$)$.

   $(3)$ A functorial $\gamma$ is a hereditary Plotkin radical if and only if it satisfies $(F1), (F2), (F3)$. Let prove it. Assume that $\gamma$ is a hereditary Plotkin radical. We need only to prove that it satisfies $(F2)$. If $N\trianglelefteq G$, then $\gamma(N)\textrm{ char }N\trianglelefteq G$. So $\gamma(N)\trianglelefteq G$. Now $\gamma(N)=\gamma(\gamma(N))\subseteq \gamma(G)$. Thus a hereditary Plotkin radical satisfies  $(F1), (F2), (F3)$. Assume that $\gamma$ satisfies  $(F1), (F2), (F3)$. We need only to prove that it is idempotent. By $(F3)$ we have $\gamma(G)=\gamma(G)\cap G\subseteq\gamma(\gamma(G))\subseteq\gamma(G)$. Thus $\gamma(\gamma(G))=\gamma(G)$.

   $(4)$ The functorial $\Phi$ which assigns to every group $G$ its  Frattini subgroup $\Phi(G)$ satisfies $(F1)$ and $(F2)$ but not $(F3)$.

   $(5)$ If $\gamma$ satisfies $(F2)$ and $(F3)$, then $\gamma(G)\cap N=\gamma(N)$ for every group $G$ and $N\trianglelefteq G$.
\end{rem}

\begin{lem}\label{FF}
  If $\gamma$  satisfies $(F1)$ and $(F2)$, then $\gamma(G_1\times G_2)=\gamma(G_1)\times \gamma(G_2)$ for any groups $G_1$ and $G_2$.
\end{lem}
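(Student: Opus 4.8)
The plan is to prove the two inclusions separately, using the projection maps of the direct product together with the embeddings of the factors. Write $\pi_i\colon G_1\times G_2\to G_i$ for the coordinate projections and identify $G_i$ with the corresponding normal subgroup of $G_1\times G_2$.

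First I would show $\gamma(G_1)\times\gamma(G_2)\subseteq\gamma(G_1\times G_2)$. Since $G_i\trianglelefteq G_1\times G_2$, property $(F2)$ gives $\gamma(G_i)\subseteq\gamma(G_1\times G_2)$ for $i=1,2$; as $\gamma(G_1\times G_2)$ is a subgroup containing both $\gamma(G_1)$ and $\gamma(G_2)$, and these commute elementwise inside $G_1\times G_2$, it contains their product $\gamma(G_1)\times\gamma(G_2)$.

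For the reverse inclusion I would apply $(F1)$ to the epimorphism $\pi_i$. This yields $\pi_i(\gamma(G_1\times G_2))\subseteq\gamma(\pi_i(G_1\times G_2))=\gamma(G_i)$. Hence $\gamma(G_1\times G_2)$ is contained in the set of elements whose first coordinate lies in $\gamma(G_1)$ and whose second coordinate lies in $\gamma(G_2)$, i.e. $\gamma(G_1\times G_2)\subseteq\gamma(G_1)\times\gamma(G_2)$. Combining the two inclusions gives the equality.

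The only mildly delicate point — and the step I would watch most carefully — is the passage from ``$\gamma(G_1\times G_2)$ projects into $\gamma(G_i)$ for each $i$'' to ``$\gamma(G_1\times G_2)\subseteq\gamma(G_1)\times\gamma(G_2)$''. This uses the elementary fact that a subgroup $H\le G_1\times G_2$ with $\pi_1(H)\le A_1$ and $\pi_2(H)\le A_2$ satisfies $H\le A_1\times A_2$, which is immediate from looking at coordinates; there is no subdirect-product subtlety here because we are bounding $H$ from above, not describing it exactly. So no serious obstacle arises; the lemma is essentially a formal consequence of $(F1)$ and $(F2)$.
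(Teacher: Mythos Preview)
Your proof is correct and follows essentially the same approach as the paper: $(F2)$ for the inclusion $\gamma(G_1)\times\gamma(G_2)\subseteq\gamma(G_1\times G_2)$, and $(F1)$ applied to the two canonical epimorphisms for the reverse inclusion. The paper phrases the second step via the quotient maps $G_1\times G_2\to (G_1\times G_2)/G_i$ and the intersection $(\gamma(G_1)\times G_2)\cap(G_1\times\gamma(G_2))$, while you use the coordinate projections $\pi_i$ directly; these are the same maps and the same computation, just in slightly different language.
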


\begin{proof}
  From $G_i\trianglelefteq G_1\times G_2$ it follows that $\gamma(G_i)\subseteq \gamma(G_1\times G_2)$ by $(F2)$ for $i\in\{1, 2\}$.
Note that $\gamma(G_1\times G_2)G_i/G_i\subseteq \gamma((G_1\times G_2)/G_i)=(\gamma(G_{\bar i})\times G_i)/G_i$ by $(F1)$ for $i\in\{1, 2\}$. Now
    \begin{multline*}
      \gamma(G_1\times G_2)\subseteq (\gamma(G_1\times G_2)G_2)\cap (\gamma(G_1\times G_2)G_1)\subseteq \\(\gamma(G_1)\times G_2)\cap (G_1\times\gamma(G_2))=\gamma(G_1)\times \gamma(G_2).
    \end{multline*}
    Thus $\gamma(G_1\times G_2)=\gamma(G_1)\times \gamma(G_2)$.
\end{proof}

Recall \cite{Plotkin1973} that  for functorials
  $\gamma_1$ and $\gamma_2$  the upper product $\gamma_2\star\gamma_1$ is  defined by $$(\gamma_2\star\gamma_1)(G)/\gamma_2(G)=\gamma_1(G/\gamma_2(G)).$$

  \begin{prop}\label{length0}
  Let $\gamma_1$ and $\gamma_2$ be functorials. If $\gamma_1$ and $\gamma_2$  satisfy $(F1)$ and $(F2)$, then $\gamma_2\star\gamma_1$ satisfies $(F1)$ and $(F2)$. Moreover if $\gamma_1$ and $\gamma_2$ also satisfy $(F3)$, then  $\gamma_2\star\gamma_1$ satisfies $(F3)$.
\end{prop}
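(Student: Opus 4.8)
The plan is as follows. Set $\delta:=\gamma_2\star\gamma_1$, so that $\delta(G)$ is by definition the full preimage in $G$ of $\gamma_1(G/\gamma_2(G))$ under the canonical epimorphism $G\to G/\gamma_2(G)$; in particular $\gamma_2(G)\subseteq\delta(G)$ always, and, being the preimage of a characteristic subgroup under a characteristic quotient, $\delta(G)$ is characteristic in $G$ and respects isomorphisms, so $\delta$ is a functorial. It then remains to verify $(F1)$, $(F2)$, and --- under the stronger hypothesis --- $(F3)$. Throughout, the only tools are the correspondence theorem relating the quotients $G/\gamma_2(G)$, $N/\gamma_2(N)$ and $N\gamma_2(G)/\gamma_2(G)$ for $N\trianglelefteq G$, together with Dedekind's modular law (available because $\gamma_2(G)\subseteq\delta(G)$); the one genuinely load-bearing point is the equality $N\cap\gamma_2(G)=\gamma_2(N)$ of Remark~\ref{rem21}$(5)$, which requires $\gamma_2$ to satisfy both $(F2)$ and $(F3)$.

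For $(F1)$, since functorials respect isomorphisms it suffices to treat a canonical epimorphism $G\to G/N$ and show $\delta(G)N/N\subseteq\delta(G/N)$. Let $M$ be the preimage in $G$ of $\gamma_2(G/N)$; by $(F1)$ for $\gamma_2$ one has $\gamma_2(G)N\subseteq M$, hence $\gamma_2(G)\subseteq M$ and a canonical epimorphism $G/\gamma_2(G)\to G/M$. Applying $(F1)$ for $\gamma_1$ to it sends $\gamma_1(G/\gamma_2(G))=\delta(G)/\gamma_2(G)$ into $\gamma_1(G/M)$, which under $(G/N)/(M/N)\cong G/M$ is exactly $\delta(G/N)/(M/N)$; reading off images gives $\delta(G)\subseteq$ (preimage of $\delta(G/N)$ in $G$), i.e.\ $\delta(G)N/N\subseteq\delta(G/N)$. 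For $(F2)$, let $N\trianglelefteq G$. By $(F2)$ for $\gamma_2$, $\gamma_2(N)\subseteq\gamma_2(G)$, and $\gamma_2(N)\trianglelefteq G$ since it is characteristic in $N$; so there is an epimorphism $N/\gamma_2(N)\to N/(N\cap\gamma_2(G))\cong N\gamma_2(G)/\gamma_2(G)\trianglelefteq G/\gamma_2(G)$. Pushing $\gamma_1(N/\gamma_2(N))=\delta(N)/\gamma_2(N)$ forward along it by $(F1)$ for $\gamma_1$, then applying $(F2)$ for $\gamma_1$ to the normal subgroup $N\gamma_2(G)/\gamma_2(G)$, lands it inside $\gamma_1(G/\gamma_2(G))=\delta(G)/\gamma_2(G)$; reading off images yields $\delta(N)\gamma_2(G)/\gamma_2(G)\subseteq\delta(G)/\gamma_2(G)$, whence $\delta(N)\subseteq\delta(G)$.

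For $(F3)$, assume in addition that $\gamma_1,\gamma_2$ satisfy $(F3)$ and let $N\trianglelefteq G$. By Remark~\ref{rem21}$(5)$ applied to $\gamma_2$, $N\cap\gamma_2(G)=\gamma_2(N)$, so the natural map is an isomorphism $N\gamma_2(G)/\gamma_2(G)\cong N/\gamma_2(N)$ carrying $\gamma_1(N\gamma_2(G)/\gamma_2(G))$ to $\gamma_1(N/\gamma_2(N))=\delta(N)/\gamma_2(N)$. Applying $(F3)$ for $\gamma_1$ to the normal subgroup $N\gamma_2(G)/\gamma_2(G)$ of $G/\gamma_2(G)$ gives
\[
\gamma_1\bigl(G/\gamma_2(G)\bigr)\cap\bigl(N\gamma_2(G)/\gamma_2(G)\bigr)\subseteq\gamma_1\bigl(N\gamma_2(G)/\gamma_2(G)\bigr).
\]
Since $\gamma_2(G)\subseteq\delta(G)$, the modular law rewrites the left side as $(\delta(G)\cap N)\gamma_2(G)/\gamma_2(G)$, while the isomorphism above rewrites the right side as $\delta(N)\gamma_2(G)/\gamma_2(G)$; hence $(\delta(G)\cap N)\gamma_2(G)\subseteq\delta(N)\gamma_2(G)$, and intersecting with $N$ and using $N\cap\gamma_2(G)=\gamma_2(N)\subseteq\delta(N)$ yields $\delta(G)\cap N\subseteq\delta(N)$, as required.

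The main obstacle I anticipate is not any single step but keeping the bookkeeping under control: because $\delta$ is defined one quotient level down, every inclusion has to be transported among $G/\gamma_2(G)$, $N/\gamma_2(N)$ and $N\gamma_2(G)/\gamma_2(G)$ without losing track of the $\gamma_2$-part, and this is precisely where the hypotheses on $\gamma_2$ are spent --- $(F1)$ to push $\gamma_2$ through a further quotient in the proof of $(F1)$, $(F2)$ to trap $\gamma_2(N)$ inside $\gamma_2(G)$, and $(F2)$ and $(F3)$ together to pin down $N\cap\gamma_2(G)=\gamma_2(N)$ for the $(F3)$ part.
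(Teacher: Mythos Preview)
Your proof is correct and follows essentially the same route as the paper's: for $(F1)$ you factor the epimorphism through $G/\gamma_2(G)\to f(G)/\gamma_2(f(G))$ and apply $(F1)$ for $\gamma_1$, for $(F2)$ you relate $N/\gamma_2(N)$ to the normal subgroup $N\gamma_2(G)/\gamma_2(G)$ of $G/\gamma_2(G)$ and apply $(F1)$ then $(F2)$ for $\gamma_1$, and for $(F3)$ you use Remark~\ref{rem21}(5) for $\gamma_2$ to identify $N\gamma_2(G)/\gamma_2(G)\cong N/\gamma_2(N)$ and then read off the desired inclusion in $G/\gamma_2(G)$. The only cosmetic difference is in $(F3)$: the paper first shows that $(N\cap\delta(G))/\gamma_2(N)$ is fixed by $\gamma_1$ (via Remark~\ref{rem21}(5) applied to $\gamma_1$) and then pushes it into $\gamma_1(N/\gamma_2(N))$ by $(F2)$, whereas you apply $(F3)$ for $\gamma_1$ directly to $N\gamma_2(G)/\gamma_2(G)\trianglelefteq G/\gamma_2(G)$; your version is a touch more direct but the content is the same.
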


\begin{proof}
  $(1)$  \emph{$\gamma_2\star\gamma_1$ satisfies $(F1)$}.

  Let $f: G\to f(G)$ be an epimorphism.  From $f(\gamma_2(G))\subseteq\gamma_2(f(G))$ it follows that the following diagram is commutative.
\[
\xymatrix{
G \ar[r]^{f} \ar[dr]^{f_4} \ar[d]^{f_1} & f(G) \ar[d]^{f_3} \\
  G/\gamma_2(G) \ar[r]^{f_2} & f(G)/\gamma_2(f(G))
}
\]
Let $X=\gamma_1(G/\gamma_2(G))$ and $Y=\gamma_1(f(G)/\gamma_2(f(G)))$. Note that $(\gamma_2\star\gamma_1)(G)=f_1^{-1}(X)$ and $(\gamma_2\star\gamma_1)(f(G))=f_3^{-1}(Y)$ by the definition of $\gamma_2\star\gamma_1$.
Since $\gamma_1$ satisfies $(F1)$, we see that $f_2(X)\subseteq Y$.  Hence $X\subseteq f_2^{-1}(Y)$. Now $(\gamma_2\star\gamma_1)(G)\subseteq   f_1^{-1}(f_2^{-1}(Y))=f_4^{-1}(Y)$. So  $$f((\gamma_2\star\gamma_1)(G))\subseteq   f(f_4^{-1}(Y))=f_3^{-1}(Y)=(\gamma_2\star\gamma_1)(f(G)).$$ Thus $\gamma_2\star\gamma_1$ satisfies $(F1)$.

$(2)$ \emph{$\gamma_2\star\gamma_1$ satisfies $(F2)$}.

    Let $N\trianglelefteq G$. From $\gamma_2(N)\textrm{ char }N\trianglelefteq G$ it follows that $\gamma_2(N)\trianglelefteq G$.  Since $\gamma_2$ satisfies $(F2)$, we see that $\gamma_2(N)\subseteq \gamma_2(G)$. So the following diagram is commutative.

\[
\xymatrix{
G \ar[r]^{f_1} \ar[dr]_{f_3}& G/\gamma_2(N) \ar[d]^{f_2} \\
 & G/\gamma_2(G)
}
\]
Let $X=\gamma_1(G/\gamma_2(N))$, $Y=\gamma_1(N/\gamma_2(N))$ and $Z=\gamma_1(G/\gamma_2(G))$. Note that      $(\gamma_2\star\gamma_1)(G)=f_3^{-1}(Z)$ and $(\gamma_1\star\gamma_2)(N)\subseteq f_1^{-1}(Y)$. Since $\gamma_1$ satisfies $(F1)$ and $(F2)$, we see that $f_2(X)\subseteq Z$ and $Y\subseteq X$. Now
     $$(\gamma_2\star\gamma_1)(N)\subseteq f_1^{-1}(Y)\subseteq f_1^{-1}(X)\subseteq f_1^{-1}(f_2^{-1}(Z))=f_3^{-1}(Z)=(\gamma_2\star\gamma_1)(G).$$
Hence $\gamma_2\star\gamma_1$ satisfies $(F2)$.

$ (3)$ \emph{If $\gamma_1$ and $\gamma_2$ also satisfy $(F3)$, then  $\gamma_2\star\gamma_1$ satisfies $(F3)$.}

 Assume that $\gamma_1$ and $\gamma_2$  satisfy $(F2)$ and $(F3)$. Let $N\trianglelefteq G$.

Since $N\gamma_2(G)/\gamma_2(G)\cap(\gamma_2\star\gamma_1)(G)/\gamma_2(G)\trianglelefteq (\gamma_2\star\gamma_1)(G)/\gamma_2(G)=\gamma_1(G/\gamma_2(G))$, we see by $(5)$ of Remark \ref{rem21}    that $$\gamma_1((N\gamma_2(G)\cap (\gamma_2\star\gamma_1)(G))/\gamma_2(G))=(N\gamma_2(G)\cap (\gamma_2\star\gamma_1)(G))/\gamma_2(G).$$
Note that
\begin{multline*}
  (N\gamma_2(G)\cap (\gamma_2\star\gamma_1)(G))/\gamma_2(G)=\\
 (N\cap (\gamma_2\star\gamma_1)(G))\gamma_2(G)/\gamma_2(G)\simeq
 (N\cap (\gamma_2\star\gamma_1)(G))/(N\cap\gamma_2(G))\\
=(N\cap (\gamma_2\star\gamma_1)(G))/\gamma_2(N)\trianglelefteq N/\gamma_2(N).
\end{multline*}
It means that $(N\cap (\gamma_2\star\gamma_1)(G))/\gamma_2(N)\subseteq \gamma_1(N/\gamma_2(N))$. Thus  $N\cap (\gamma_2\star\gamma_1)(G)\subseteq  (\gamma_2\star\gamma_1)(N)$, i.e $\gamma_2\star\gamma_1$ satisfies $(F3)$.\end{proof}

Here we introduce the height $h_\gamma(G)$ of a group $G$ which corresponds to a given functorial $\gamma$.

\begin{defn}
  Let $\gamma$ be a functorial. Then the $\gamma$-series of $G$ is defined  starting from $\gamma_{(0)}(G)=1$, and then by induction $\gamma_{(i+1)}(G)=(\gamma_{(i)}\star\gamma)(G)$ is the inverse image of $\gamma(G/\gamma_{(i)}(G))$. The least number $h$ such that $\gamma_{(h)}(G)=G$ is defined to be $\gamma$-height $h_\gamma(G)$ of $G$. If there is no such number, then $h_\gamma(G)=\infty$.
\end{defn}

The following Lemma directly follows from Proposition \ref{length0}.

\begin{lem}\label{n}
 Let $\gamma$   be a functorial. If $\gamma$  satisfies $(F1)$ and $(F2)$, then $\gamma_{(n)}$ satisfies $(F1)$ and $(F2)$ for all natural $n$. Moreover if $\gamma$ satisfies $(F3)$, then  $\gamma_{(n)}$ satisfies $(F3)$ for all natural $n$.
\end{lem}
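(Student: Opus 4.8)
The plan is to reduce the claim about $\gamma_{(n)}$ to Proposition~\ref{length0} by induction on $n$. First I would record the identity $\gamma_{(n+1)} = \gamma_{(n)}\star\gamma$, which is immediate from the definition of the $\gamma$-series: indeed $\gamma_{(n+1)}(G)$ is by definition the inverse image in $G$ of $\gamma(G/\gamma_{(n)}(G))$, and $(\gamma_{(n)}\star\gamma)(G)$ is the inverse image in $G$ of $\gamma(G/\gamma_{(n)}(G))$ as well, so the two functorials coincide. (Strictly, one should note that $\gamma_{(n)}(G)$ is a characteristic subgroup of $G$ so that the quotient $G/\gamma_{(n)}(G)$ makes sense as a functorial argument; this follows since $\gamma$ itself is a functorial and the inverse-image construction preserves characteristicity at each step.)

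Next comes the induction. The base case $n=1$ is trivial, since $\gamma_{(1)} = \gamma_{(0)}\star\gamma$ and $\gamma_{(0)}(G)=1$ gives $\gamma_{(1)}(G)=\gamma(G)$, so $\gamma_{(1)}=\gamma$ inherits $(F1)$, $(F2)$, and (when assumed) $(F3)$ from $\gamma$ by hypothesis. For the inductive step, suppose $\gamma_{(n)}$ satisfies $(F1)$ and $(F2)$. Then both $\gamma_{(n)}$ and $\gamma$ satisfy $(F1)$ and $(F2)$, so by Proposition~\ref{length0} the upper product $\gamma_{(n)}\star\gamma = \gamma_{(n+1)}$ satisfies $(F1)$ and $(F2)$. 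If in addition $\gamma$ satisfies $(F3)$, then by the inductive hypothesis $\gamma_{(n)}$ satisfies $(F1)$, $(F2)$, $(F3)$ (the $(F3)$ part being carried along in the induction), and again Proposition~\ref{length0} — its ``moreover'' clause — gives that $\gamma_{(n+1)}=\gamma_{(n)}\star\gamma$ satisfies $(F3)$. This closes the induction and proves the Lemma.

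I do not expect any genuine obstacle here: the content is entirely in Proposition~\ref{length0}, and Lemma~\ref{n} is precisely its iterated form. The only point requiring a moment's care is the bookkeeping of which properties must be carried through the induction simultaneously — one cannot prove ``$\gamma_{(n)}$ satisfies $(F3)$'' by itself, because the $(F3)$-part of Proposition~\ref{length0} requires $(F2)$ (and $(F3)$) of both factors, so the inductive hypothesis must bundle $(F1)$, $(F2)$, $(F3)$ together in the case $\gamma$ satisfies all three. With that bundling the argument is a direct, essentially formal consequence of the Proposition, which is why the excerpt remarks that it ``directly follows.''
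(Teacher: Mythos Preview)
Your proposal is correct and is exactly the argument the paper has in mind: the paper does not write out a proof at all, merely noting that the Lemma ``directly follows from Proposition~\ref{length0},'' and your induction on $n$ using $\gamma_{(n+1)}=\gamma_{(n)}\star\gamma$ is the natural way to unpack that remark.
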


\begin{lem}\label{ineq}
Let $\gamma$   be a functorial. If $\gamma$  satisfies $(F1)$ and $(F2)$, then
 $h_\gamma(G/N)\leq h_\gamma(G)\leq h_\gamma(N)+h_\gamma(G/N)$ for every $N\trianglelefteq G$. Moreover, if  $\gamma$ also satisfies $(F3)$, then  $h_\gamma(N)\leq h_\gamma(G)$.
\end{lem}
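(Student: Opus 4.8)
The plan is to deduce all three inequalities from Lemma~\ref{n} --- which tells us that every term $\gamma_{(i)}$ of the $\gamma$-series is again a functorial inheriting $(F1)$, $(F2)$ (and $(F3)$ when $\gamma$ satisfies it) --- together with one elementary observation: iterating the defining relation $\gamma_{(i+1)}(G)/\gamma_{(i)}(G)=\gamma(G/\gamma_{(i)}(G))$ yields, by an easy induction on $b$, the shift identity
\[
\gamma_{(a+b)}(G)/\gamma_{(a)}(G)=\gamma_{(b)}\bigl(G/\gamma_{(a)}(G)\bigr)\qquad(a,b\ge 0).
\]
In particular $\gamma_{(a+b)}(G)=G$ precisely when $\gamma_{(b)}(G/\gamma_{(a)}(G))=G/\gamma_{(a)}(G)$, so that $h_\gamma(G)\le a+h_\gamma(G/\gamma_{(a)}(G))$ for every $a\ge 0$. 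I treat the value $\infty$ with the usual conventions, so that whenever a right-hand side below is $\infty$ the corresponding inequality is vacuous; hence I may assume the relevant heights are finite.

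First I would prove $h_\gamma(G/N)\le h_\gamma(G)$. Put $h=h_\gamma(G)$. Since $\gamma_{(h)}$ satisfies $(F1)$ by Lemma~\ref{n}, applying it to the canonical epimorphism $G\to G/N$ gives $\gamma_{(h)}(G)N/N\subseteq\gamma_{(h)}(G/N)$, and because $\gamma_{(h)}(G)=G$ this forces $\gamma_{(h)}(G/N)=G/N$, i.e.\ $h_\gamma(G/N)\le h$. Next, for $h_\gamma(G)\le h_\gamma(N)+h_\gamma(G/N)$, set $m=h_\gamma(N)$ and $k=h_\gamma(G/N)$. The crucial step is that $\gamma_{(m)}$ satisfies $(F2)$ by Lemma~\ref{n} and $N\trianglelefteq G$, so $N=\gamma_{(m)}(N)\subseteq\gamma_{(m)}(G)$; consequently $G/\gamma_{(m)}(G)$ is an epimorphic image of $G/N$ (namely $(G/N)/(\gamma_{(m)}(G)/N)$), and the inequality just proved --- applied inside $G/N$ to its normal subgroup $\gamma_{(m)}(G)/N$ --- gives $h_\gamma(G/\gamma_{(m)}(G))\le h_\gamma(G/N)=k$. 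Combining this with the shift identity, $h_\gamma(G)\le m+h_\gamma(G/\gamma_{(m)}(G))\le m+k$, as required.

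For the \emph{moreover} statement, assume $\gamma$ also satisfies $(F3)$ and again put $h=h_\gamma(G)$. By Lemma~\ref{n} the term $\gamma_{(h)}$ satisfies $(F3)$, whence $N=G\cap N=\gamma_{(h)}(G)\cap N\subseteq\gamma_{(h)}(N)$, so $\gamma_{(h)}(N)=N$ and $h_\gamma(N)\le h$. I do not expect a genuine obstacle here: once Lemma~\ref{n} is available, the three closure properties $(F1),(F2),(F3)$ of $\gamma_{(i)}$ pass almost verbatim into the three asserted inequalities. The points that want a word of care are the (routine) induction establishing the shift identity, the bookkeeping with $\infty$, and --- the one real idea --- the observation $N\subseteq\gamma_{(m)}(G)$ obtained by applying $(F2)$ to the $m$-th term, which is exactly what allows the quotient inequality $h_\gamma(G/N)\le h_\gamma(G)$ to be recycled in order to finish the bound $h_\gamma(G)\le h_\gamma(N)+h_\gamma(G/N)$.
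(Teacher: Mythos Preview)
Your proof is correct and follows essentially the same route as the paper: both use Lemma~\ref{n} to transfer $(F1)$, $(F2)$, $(F3)$ to $\gamma_{(i)}$, then apply $(F1)$ to the quotient map for the first inequality, $(F2)$ to get $N\subseteq\gamma_{(h_\gamma(N))}(G)$ for the second, and $(F3)$ for the third. The only difference is expository --- you state the shift identity $\gamma_{(a+b)}(G)/\gamma_{(a)}(G)=\gamma_{(b)}(G/\gamma_{(a)}(G))$ explicitly, whereas the paper leaves the passage from $h_\gamma(G/\gamma_{(h_\gamma(N))}(G))\le h_\gamma(G/N)$ to $h_\gamma(G)\le h_\gamma(N)+h_\gamma(G/N)$ implicit.
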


\begin{proof}
Note that $\gamma_{(n)}$ satisfies $(F1)$ and $(F2)$ for every $n$ by Lemma \ref{n}.

 Since $\gamma_{(n)}$ satisfies   $(F1)$, $G/N=\gamma_{h_\gamma(G)}(G)/N\leq \gamma_{(h_\gamma(G))}(G/N)\leq G/N$. So $\gamma_{(h_\gamma(G))}(G/N)=G/N$. Thus $h_\gamma(G/N)\leq h_\gamma(G)$.

 Since $\gamma_{(n)}$ satisfies   $(F2)$, we see that $N=\gamma_{(h_\gamma(N))}(N)\subseteq \gamma_{(h_\gamma(N))}(G)$. Note that $h_\gamma(G/\gamma_{(h_\gamma(N))}(G))\leq h_\gamma(G/N)$. Thus  $h_\gamma(G)\leq h_\gamma(N)+h_\gamma(G/N)$.

 Assume that $\gamma$ also satisfies $(F3)$. Then  $\gamma_{(n)}$ satisfies $(F3)$ by Lemma \ref{n}. Now $N=G\cap N=\gamma_{(h_\gamma(G))}(G)\cap N\subseteq\gamma_{(h_\gamma(G))}(N)\leq N$. So $\gamma_{(h_\gamma(G))}(N)=N$. Thus $h_\gamma(N)\leq h_\gamma(G)$.
\end{proof}

If $\gamma=\mathrm{F}^*$, then $h_\gamma(G)=h^*(G)$ for every group $G$.  The non-$p$-soluble length can also be defined with the help of functorials. Here by $R_p(G)$ we denote the $p$-soluble radical of a group $G$.

\begin{lem}\label{lambdap}
Let $\mathrm{\overline{F}_p}=R_p\star \mathrm{F}^*\star R_p$ and $G$ be a non-$p$-soluble group. Then $\lambda_p(G)$ is the smallest natural $i$ with  $\mathrm{\overline{F}_p}_{(i)}(G)=G$.
\end{lem}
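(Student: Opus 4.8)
The plan is to establish the two inequalities $\lambda_p(G)\le h_{\mathrm{\overline{F}_p}}(G)$ and $h_{\mathrm{\overline{F}_p}}(G)\le\lambda_p(G)$. Since $R_p$ and $\mathrm{F}^*$ satisfy $(F1),(F2),(F3)$ (Remark~\ref{rem21}), so does $\mathrm{\overline{F}_p}=R_p\star\mathrm{F}^*\star R_p$ by Proposition~\ref{length0}. Everything rests on one description of a single application of $\mathrm{\overline{F}_p}$: by the definition of the upper product, $\mathrm{\overline{F}_p}(H)$ sits in a characteristic chain $1\le R_p(H)\le(R_p\star\mathrm{F}^*)(H)\le\mathrm{\overline{F}_p}(H)$ whose three successive factors are $R_p(H)$, then $\mathrm{F}^*(H/R_p(H))$, then $R_p\bigl(H/(R_p\star\mathrm{F}^*)(H)\bigr)$. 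The outer two are $p$-soluble by definition. For the middle one, $H/R_p(H)$ has trivial $p$-soluble radical, hence trivial Fitting subgroup (a nilpotent normal subgroup is $p$-soluble and so lies in $R_p$), so $\mathrm{F}^*(H/R_p(H))$ equals the layer $E(H/R_p(H))$; its centre is abelian and normal, hence again trivial, and a layer with trivial centre is a direct product of nonabelian simple groups. Thus \emph{each application of $\mathrm{\overline{F}_p}$ inserts a ``$p$-soluble / semisimple / $p$-soluble'' sandwich}, the semisimple part possibly trivial. The assumption that $G$ is non-$p$-soluble serves only to avoid the trivial off-by-one: for a $p$-soluble $G\neq1$ one has $\lambda_p(G)=0$ but $\mathrm{\overline{F}_p}(G)=G$.

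\emph{The inequality $\lambda_p(G)\le h_{\mathrm{\overline{F}_p}}(G)$.} If the right side is infinite there is nothing to prove; otherwise call it $i$. Refine the $\mathrm{\overline{F}_p}$-series of $G$ by inserting, between $\mathrm{\overline{F}_p}_{(j)}(G)$ and $\mathrm{\overline{F}_p}_{(j+1)}(G)$, the full preimages in $G$ of $R_p$ and of $R_p\star\mathrm{F}^*$ applied to $G/\mathrm{\overline{F}_p}_{(j)}(G)$. By the sandwich description every term is characteristic in $G$, every factor is $p$-soluble or a direct product of nonabelian simple groups, and at most one factor per step $j$ is nonsoluble. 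Hence $\mathrm{\overline{F}_p}_{(i)}(G)=G$ produces a normal series of $G$ with at most $i$ nonsoluble factors, so $\lambda_p(G)\le i$. Here one uses the standard fact that $\lambda_p(G)$ equals the minimum number of ``direct product of nonabelian simple groups'' factors over \emph{all} normal series of $G$ whose factors are $p$-soluble or such products --- consecutive $p$-soluble factors merge, trivial semisimple factors are dropped, and trivial $p$-soluble factors are inserted to reach the canonical alternating form, none of which changes the count.

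\emph{The inequality $h_{\mathrm{\overline{F}_p}}(G)\le\lambda_p(G)$.} Argue by induction on $h=\lambda_p(G)\ge1$, using the defining series $1=G_0\le\dots\le G_{2h+1}=G$. The key claim is $G_3\le\mathrm{\overline{F}_p}(G)$. Indeed $G_1$ is $p$-soluble and normal in $G$, so $G_1\le R_p(G)$; then $G_2R_p(G)/R_p(G)$ is an epimorphic image of $G_2/G_1$, hence a direct product of nonabelian simple groups, and it is normal in $G/R_p(G)$, so (being a product of subnormal quasisimple subgroups, i.e.\ of components) it lies in $E(G/R_p(G))=\mathrm{F}^*(G/R_p(G))$; thus $G_2\le(R_p\star\mathrm{F}^*)(G)$. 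Finally the image of $G_3$ in $G/(R_p\star\mathrm{F}^*)(G)$ is an epimorphic image of $G_3/G_2$, hence $p$-soluble and normal, so it lies in $R_p\bigl(G/(R_p\star\mathrm{F}^*)(G)\bigr)$; that is, $G_3\le\mathrm{\overline{F}_p}(G)$. For $h=1$ this already gives $\mathrm{\overline{F}_p}(G)=G$. For $h\ge2$, the image in $G/\mathrm{\overline{F}_p}(G)$ of the tail $G_3\le G_4\le\dots\le G$ is a normal series whose factors are $p$-soluble or direct products of nonabelian simple groups, with at most $h-1$ nonsoluble factors, so $\lambda_p(G/\mathrm{\overline{F}_p}(G))\le h-1$; by the induction hypothesis $h_{\mathrm{\overline{F}_p}}(G/\mathrm{\overline{F}_p}(G))\le h-1$ (if that quotient is $p$-soluble or trivial this holds directly, since $h-1\ge1$). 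As $\mathrm{\overline{F}_p}$ satisfies $(F3)$ it is idempotent, so $h_{\mathrm{\overline{F}_p}}(\mathrm{\overline{F}_p}(G))\le1$; Lemma~\ref{ineq} applied to $\mathrm{\overline{F}_p}(G)\trianglelefteq G$ then gives $h_{\mathrm{\overline{F}_p}}(G)\le h_{\mathrm{\overline{F}_p}}(\mathrm{\overline{F}_p}(G))+h_{\mathrm{\overline{F}_p}}(G/\mathrm{\overline{F}_p}(G))\le 1+(h-1)=h$. Combined with the first part, $h_{\mathrm{\overline{F}_p}}(G)=\lambda_p(G)$.

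The argument contains no isolated hard step; the difficulty is the bookkeeping --- keeping the canonical alternating series of the definition of $\lambda_p$ compatible with the ad hoc series produced by $\mathrm{\overline{F}_p}$, tracking which layer of each sandwich is $p$-soluble and which semisimple, handling trivial layers and the $p$-soluble versus non-$p$-soluble dichotomy inside the induction, and verifying that the constructed terms are normal in $G$ (automatic, since functorials take characteristic values). The only genuine group-theoretic inputs are that $R_p(H)=1$ forces $\mathrm{F}^*(H)$ to be a direct product of nonabelian simple groups, and that a normal subgroup of $H$ which is a direct product of nonabelian simple groups is contained in the layer $E(H)$.
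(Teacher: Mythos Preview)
Your proof is correct and follows essentially the same approach as the paper's. Both arguments hinge on the same key step $G_3\le\mathrm{\overline{F}_p}(G)$, proved identically; for the converse inequality both refine the $\mathrm{\overline{F}_p}$-series into an alternating $p$-soluble/semisimple series. The only organisational differences are that the paper iterates the inclusion directly to $G_{2i+1}\le\mathrm{\overline{F}_p}_{(i)}(G)$ rather than passing to the quotient and invoking Lemma~\ref{ineq}, and that you spell out explicitly (via $R_p(H/R_p(H))=1\Rightarrow\mathrm{F}(H/R_p(H))=1\Rightarrow\mathrm{F}^*=E$ with trivial centre) why the middle factor is a genuine direct product of nonabelian simple groups, whereas the paper absorbs this into the remark $R_p\star R_p=R_p$.
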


\begin{proof}
Let $1=G_0\leq G_1\leq\dots\leq G_{2h+1}=G$ be the shortest normal series in which for $i$ odd the factor $G_{i+1}/G_i$ is $p$-soluble (possibly trivial),
and for $i$ even the factor $G_{i+1}/G_i$ is a (non-empty) direct product of nonabelian simple
groups.

Note that $G_1\leq R_p(G)$ and $G_2/G_1$ is quasinilpotent. Hence $G_2R_p(G)/R_p(G)$ is quasinilpotent. It means that $G_2R_p(G)/R_p(G)\leq \mathrm{F}^*(G/R_p(G))$.    Hence $G_2\leq (R_p\star \mathrm{F}^*)(G)$. Since $G_3/G_2$ is $p$-soluble, we see that $G_3(R_p\star \mathrm{F}^*)(G)/(R_p\star \mathrm{F}^*)(G)$ is $p$-soluble. Hence  $G_3(R_p\star \mathrm{F}^*)(G)/(R_p\star \mathrm{F}^*)(G)\leq R_p(G/(R_p\star \mathrm{F}^*)(G))$. It means that $G_3\leq \mathrm{\overline{F}_p}(G)=\mathrm{\overline{F}_p}_{(1)}(G)$.

Assume that we proved $G_{2i+1}\leq \mathrm{\overline{F}_p}_{(i)}(G)$. Let prove that  $G_{2(i+1)+1}\leq \mathrm{\overline{F}_p}_{(i+1)}(G)$.

From $G_{2i+1}\leq \mathrm{\overline{F}_p}_{(i)}(G)$ it follows that $G_{2i+1}\leq (\mathrm{\overline{F}_p}_{(i)}\star R_p)(G)$.
Note that $G_{2i+2}/G_{2i+1}$ is quasinilpotent. It means that $G_{2i+2}(\mathrm{\overline{F}_p}_{(i)}\star R_p)(G)/(\mathrm{\overline{F}_p}_{(i)}\star R_p)(G)$ is quasinilpotent. Hence $G_{2i+2}\leq ((\mathrm{\overline{F}_p}_{(i)}\star R_p)\star\mathrm{F}^*)(G)$. Since
$G_{2(i+1)+1}/G_{2i+2}$ is $p$-soluble, we see that $G_{2(i+1)+1}(\mathrm{\overline{F}_p}_{(i)}\star R_p\star\mathrm{F}^*)(G)/(\mathrm{\overline{F}_p}_{(i)}\star R_p\star \mathrm{F}^*)(G)$ is $p$-soluble.
Hence  $G_{2(i+1)+1}(\mathrm{\overline{F}_p}_{(i)}\star R_p\star \mathrm{F}^*)(G)/((\mathrm{\overline{F}_p}_{(i)}\star R_p\star \mathrm{F}^*)(G)\leq R_p(G/(\mathrm{\overline{F}_p}_{(i)}\star R_p\star \mathrm{F}^*)(G))$. It means that $G_{2(i+1)+1}\leq (\mathrm{\overline{F}_p}_{(i)}\star R_p\star \mathrm{F}^*\star R_p)(G)=\mathrm{\overline{F}_p}_{(i+1)}(G)$.

Therefore $\lambda_p(G)\geq n$ where $n$ is the smallest integer with   $\mathrm{\overline{F}_p}_{(n)}(G)=n$. Since $R_p\star R_p=R_p$, we see that  $\mathrm{\overline{F}_p}_{(n)}(G)$ presents a normal series $1\leq F_1\leq F_2\leq\dots\leq F_{2n+1}$ in which for $i$ odd the factor $F_{i+1}/F_i$ is $p$-soluble (possibly trivial),
and for $i$ even the factor $F_{i+1}/F_i$ is a (non-empty) direct product of nonabelian simple
groups. So $\lambda_p(G)\leq n$. Thus $\lambda_p(G)= n$.\end{proof}

Now we are able to estimate the $\gamma$-height of the direct product subgroups and of the join of subnormal subgroups:

\begin{thm}\label{len3}
Let $\gamma$ be  a functorial with $\gamma(H)>1$ for every group $H$ that satisfies $(F1)$ and~$(F2)$.

$(1)$  If $G=\times_{i=1}^n  A_i$ is the direct product of its normal subgroups $A_i$, then $h_\gamma(G)= \max\{h_\gamma(A_i)\mid 1\leq i\leq n\}$.

$(2)$ Let $G=\langle A_i\mid 1\leq i\leq n\rangle$ be the join of its subnormal subgroups $A_i$. Then $h_\gamma(G)\leq \max\{h_\gamma(A_i)\mid 1\leq i\leq n\}$. If $ \gamma$ satisfies $(F3)$, then $h_\gamma(G)= \max\{h_\gamma(A_i)\mid 1\leq i\leq n\}$.
\end{thm}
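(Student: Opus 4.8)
The plan is to deduce everything from the behaviour of the iterated functorial $\gamma_{(k)}$ recorded in Lemmas \ref{FF}, \ref{n} and \ref{ineq}, plus one elementary remark: since $\gamma(H)>1$ for every nontrivial $H$ and all groups are finite, for any group $A$ the chain $1=\gamma_{(0)}(A)\leq\gamma_{(1)}(A)\leq\dots$ strictly increases until it reaches $A$, so $h_\gamma(A)$ is finite, and moreover $\gamma_{(k)}(A)=A$ if and only if $k\geq h_\gamma(A)$ (if $\gamma_{(j)}(A)=A$, then $A/\gamma_{(j)}(A)=1$ forces $\gamma_{(j+1)}(A)=A$).

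For $(1)$ I would first prove by induction on $k$ that $\gamma_{(k)}\bigl(\times_{i=1}^n A_i\bigr)=\times_{i=1}^n\gamma_{(k)}(A_i)$. The case $k=0$ is trivial. For the inductive step put $G=\times_i A_i$; by the induction hypothesis $G/\gamma_{(k)}(G)\cong\times_i\bigl(A_i/\gamma_{(k)}(A_i)\bigr)$, so applying Lemma \ref{FF} to $\gamma$ (which satisfies $(F1)$ and $(F2)$, the two-factor statement extending to $n$ factors by a trivial induction) gives $\gamma(G/\gamma_{(k)}(G))\cong\times_i\bigl(\gamma_{(k+1)}(A_i)/\gamma_{(k)}(A_i)\bigr)$; pulling back along $G\to G/\gamma_{(k)}(G)$ yields $\gamma_{(k+1)}(G)=\times_i\gamma_{(k+1)}(A_i)$. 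With this formula, $\gamma_{(k)}(G)=G$ holds exactly when $\gamma_{(k)}(A_i)=A_i$ for every $i$, i.e. when $k\geq h_\gamma(A_i)$ for every $i$; the least such $k$ is $\max_i h_\gamma(A_i)$, which is $h_\gamma(G)$.

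For $(2)$ set $m=\max_i h_\gamma(A_i)$ and fix, for each $i$, a subnormal chain $A_i=A_i^{(0)}\trianglelefteq A_i^{(1)}\trianglelefteq\dots\trianglelefteq A_i^{(k_i)}=G$. By Lemma \ref{n}, $\gamma_{(m)}$ satisfies $(F2)$, so applying $(F2)$ step by step along this chain gives $\gamma_{(m)}(A_i)\subseteq\gamma_{(m)}(G)$; since $m\geq h_\gamma(A_i)$, the remark above gives $\gamma_{(m)}(A_i)=A_i$, hence $A_i\subseteq\gamma_{(m)}(G)$ for all $i$, and therefore $G=\langle A_i\mid 1\leq i\leq n\rangle\subseteq\gamma_{(m)}(G)$, i.e. $h_\gamma(G)\leq m$. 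If $\gamma$ also satisfies $(F3)$, then by the last assertion of Lemma \ref{ineq} applied step by step along the same chain, $h_\gamma(A_i)=h_\gamma(A_i^{(0)})\leq\dots\leq h_\gamma(A_i^{(k_i)})=h_\gamma(G)$, so $\max_i h_\gamma(A_i)\leq h_\gamma(G)$ and equality holds.

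The write-up should be short. The only points needing care are the bookkeeping in the induction for the direct-product formula, and the fact that $(F2)$ and $(F3)$ are hypotheses about \emph{normal} subgroups, so one must reach the \emph{subnormal} subgroups $A_i$ by iterating along a subnormal series; one must also invoke $\gamma(H)>1$ together with finiteness of $G$ to know the relevant $\gamma$-heights are finite, so that the equalities $\gamma_{(m)}(A_i)=A_i$ are legitimate. I do not expect a genuine obstacle beyond this bookkeeping.
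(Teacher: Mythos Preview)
Your proposal is correct and follows essentially the same approach as the paper. The only cosmetic differences are that the paper obtains $\gamma_{(k)}(\times_i A_i)=\times_i\gamma_{(k)}(A_i)$ in one stroke by applying Lemma~\ref{FF} to $\gamma_{(k)}$ (which satisfies $(F1)$ and $(F2)$ by Lemma~\ref{n}) rather than by your induction on $k$, and for the $(F3)$ direction it invokes Remark~\ref{rem21}(5) to get $\gamma_{(h_\gamma(G))}(A_i)=A_i$ directly instead of chaining Lemma~\ref{ineq} along a subnormal series; the underlying ideas are identical.
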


\begin{proof}
Note that $\gamma_{(n)}$ satisfies $(F1)$ and $(F2)$ for every $n$ by Proposition \ref{length0}.

$(1)$ From Lemma \ref{FF} it follows that if   $G=\times_{i=1}^n  A_i$, then $\gamma_{(n)}(G)=\times_{i=1}^n  \gamma_{(n)}(A_i)$. It means that $h_\gamma(G)= \max\{h_\gamma(A_i)\mid 1\leq i\leq n\}$.

$(2)$ Assume  that $G=\langle A_i\mid 1\leq i\leq n\rangle$ is the join of its subnormal subgroups $A_i$,  $h_1=\max\{h_\gamma(A_i)\mid 1\leq i\leq n\}$ and $h_2=h_\gamma(G)$. Since $\gamma_{(n)}$ satisfies $(F2)$, we see that  $\gamma_{(n)}(N)\subseteq \gamma_{(n)}(G)$ for every subnormal subgroup $N$ of $G$ and every $n$. Now $$G=\langle A_i\mid 1\leq i\leq n\rangle=\langle \gamma_{(h_1)}(A_i)\mid 1\leq i\leq n\rangle\subseteq \gamma_{(h_1)}(G)\subseteq G.$$ Hence $\gamma_{(h_1)}(G)=G$. It means that $h_2\leq h_1$.

  Suppose that $\gamma$ satisfies $(F3)$. Now  $\gamma_{(n)}$ satisfies $(F3)$
  for every $n$ by Proposition \ref{length0}. From $(5)$ of Remark \ref{rem21} it follows that $\gamma_{(n)}(G)\cap N=\gamma_{(n)}(N)$   for every subnormal subgroup $N$ of $G$. Now   $A_i=A_i\cap G=A_i\cap \gamma_{(h_2)}(G)=\gamma_{(h_2)}(A_i)$. It means that $h_\gamma(A_i)\leq h_2$ for every $i$. Hence $h_1\leq h_2$. Thus $h_1=h_2$.
\end{proof}

\begin{cor}
  Let a group $G=\langle A_i\mid 1\leq i\leq n\rangle$ be the join of its subnormal subgroups $A_i$. Then $h^*(G)= \max\{h^*(A_i)\mid 1\leq i\leq n\}$ and $\lambda_p(G)= \max\{\lambda_p(A_i)\mid 1\leq i\leq n\}$.
\end{cor}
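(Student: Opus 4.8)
\emph{Proof sketch.} Both equalities are obtained by specializing Theorem~\ref{len3}(2) to a suitable functorial, so the whole task is (i)~to verify that the relevant functorials meet the hypotheses of that theorem, (ii)~to identify $h_\gamma$ with $h^*$, respectively $\lambda_p$, and (iii)~to handle one small corner case. First note that $\mathrm{F}^*$ and $R_p$ satisfy $(F1),(F2),(F3)$ by Remark~\ref{rem21}(0); applying Proposition~\ref{length0} twice, the functorial $\mathrm{\overline{F}_p}=R_p\star\mathrm{F}^*\star R_p$ then also satisfies $(F1),(F2),(F3)$. Moreover $\mathrm{F}^*(H)\neq 1$ for every nontrivial $H$ (since $C_H(\mathrm{F}^*(H))\leq\mathrm{F}^*(H)$), and likewise $\mathrm{\overline{F}_p}(H)\supseteq(R_p\star\mathrm{F}^*)(H)\neq 1$ for every nontrivial $H$: if $R_p(H)=1$ this last group equals $\mathrm{F}^*(H)\neq 1$, and otherwise $R_p(H)\neq 1$. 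Hence both $\mathrm{F}^*$ and $\mathrm{\overline{F}_p}$ qualify as $\gamma$ in Theorem~\ref{len3}.

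For the first equality, recall that $h_{\mathrm{F}^*}(G)=h^*(G)$ for every group (noted right after Lemma~\ref{ineq}). Since $\mathrm{F}^*$ satisfies $(F1),(F2),(F3)$, Theorem~\ref{len3}(2) with $\gamma=\mathrm{F}^*$ gives $h^*(G)=\max\{h^*(A_i)\mid 1\leq i\leq n\}$.

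For the second equality, split into two cases. If every $A_i$ is $p$-soluble, then, as observed in the proof of Theorem~\ref{len3}(2), $(F2)$ yields $R_p(A_i)\subseteq R_p(G)$ for each subnormal $A_i$; since $R_p(A_i)=A_i$, we get $G=\langle A_i\rangle\subseteq R_p(G)$, so $G$ is $p$-soluble and both sides equal $0$. Otherwise some $A_j$ is not $p$-soluble, and then $G$ is not $p$-soluble either. By Lemma~\ref{lambdap}, $\lambda_p(H)=h_{\mathrm{\overline{F}_p}}(H)$ for every non-$p$-soluble $H$, in particular for $G$ and for those $A_i$ that are not $p$-soluble; and for a $p$-soluble $A_i$ one has $h_{\mathrm{\overline{F}_p}}(A_i)\leq 1\leq\lambda_p(A_j)=h_{\mathrm{\overline{F}_p}}(A_j)$, so the maximum of the $h_{\mathrm{\overline{F}_p}}(A_i)$ is attained at a non-$p$-soluble factor. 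Applying Theorem~\ref{len3}(2) with $\gamma=\mathrm{\overline{F}_p}$ therefore gives
$$\lambda_p(G)=h_{\mathrm{\overline{F}_p}}(G)=\max\{h_{\mathrm{\overline{F}_p}}(A_i)\mid 1\leq i\leq n\}=\max\{\lambda_p(A_i)\mid A_i\text{ not }p\text{-soluble}\}=\max\{\lambda_p(A_i)\mid 1\leq i\leq n\}.$$

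The only genuinely delicate point is the mismatch between $h_{\mathrm{\overline{F}_p}}$ and $\lambda_p$ on $p$-soluble groups (where the former is $1$ but the latter $0$), which forces the case distinction above and uses that Lemma~\ref{lambdap} is stated only for non-$p$-soluble groups; everything else is a direct citation of Theorem~\ref{len3}, Proposition~\ref{length0}, and Lemma~\ref{lambdap}. $\qquad\square$
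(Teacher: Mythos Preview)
Your proof is correct and follows the paper's intended approach: the corollary is stated there without proof as an immediate consequence of Theorem~\ref{len3}, and you supply exactly that specialization (with $\gamma=\mathrm{F}^*$ and $\gamma=\mathrm{\overline{F}_p}$), together with the verification via Remark~\ref{rem21}(0) and Proposition~\ref{length0} that the hypotheses are met. If anything you are more careful than the paper, since you explicitly handle the discrepancy between $\lambda_p$ and $h_{\mathrm{\overline{F}_p}}$ on $p$-soluble groups that Lemma~\ref{lambdap} leaves open; the paper simply omits this point.
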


\section{The Classes of Groups Method}

 Recall that a \emph{formation} is a class $\mathfrak{F}$ of groups with the following properties:
$(a)$ every homomorphic image of an $\mathfrak{F}$-group is an $\mathfrak{F}$-group, and
$(b)$ if $G / M$ and $G / N$ are $\mathfrak{F}$-groups, then also $G/(M\cap N)\in \mathfrak{F}$.
Recall that the $ \mathfrak{F}$-\emph{residual} of a group $ G$ is the smallest normal subgroup $G^\mathfrak{F}$ of $ G$ with $ G/G^\mathfrak{F}\in\mathfrak{F}$.

A formation is called Fitting if $(a)$ from $N\trianglelefteq G\in\mathfrak{F}$ it follows that $N\in\mathfrak{F}$ and $(b)$  a group $G\in\mathfrak{F}$ whenever it is a product of normal $\mathfrak{F}$-subgroups. Recall that the $ \mathfrak{F}$-\emph{radical} $G_\mathfrak{F}$ of a group $ G$ is the greatest normal $\mathfrak{F}$-subgroup.

The classes $\mathfrak{N}^*$ of all quasinilpotent groups and $\mathfrak{S}^p$ of all $p$-soluble groups are Fitting formations.

From \cite[IX, Remarks 1.11 and Theorem 1.12]{s8} and \cite[IV,  Theorem 1.8]{s8} follows

\begin{lem}
Let    $\mathfrak{F}$ and $\mathfrak{H}$ be  non-empty Fitting formations. Then
$$\mathfrak{FH}=(G\mid G^\mathfrak{F}\in\mathfrak{H})=(G\mid G/G_\mathfrak{H}\in\mathfrak{F})$$
is a Fitting formation.
\end{lem}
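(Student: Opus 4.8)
The plan is to verify that the class $\mathfrak{FH}=(G\mid G^\mathfrak{F}\in\mathfrak{H})$ is a formation and is closed under the two Fitting operations, quoting the two cited results from \cite{s8} for the formation part and the equality of the two descriptions. First I would recall that by \cite[IV, Theorem 1.8]{s8} the product of two formations is again a formation, so $\mathfrak{FH}$ is a formation; and by \cite[IX, Remarks 1.11 and Theorem 1.12]{s8} the set-theoretic description $\mathfrak{FH}=(G\mid G^\mathfrak{F}\in\mathfrak{H})=(G\mid G/G_\mathfrak{H}\in\mathfrak{F})$ holds whenever $\mathfrak{H}$ is a Fitting formation (the residual characterization always makes sense; the radical characterization uses that $\mathfrak{H}$ is a Fitting class so that $G_\mathfrak{H}$ is well-defined and behaves well). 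Thus the only thing left to prove from scratch is that $\mathfrak{FH}$ is a Fitting class, i.e. closed under taking normal subgroups and under products of normal $\mathfrak{FH}$-subgroups.

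For the normal-subgroup closure, let $N\trianglelefteq G\in\mathfrak{FH}$. I would use the description $G\in\mathfrak{FH}\iff G/G_\mathfrak{H}\in\mathfrak{F}$. Since $\mathfrak{H}$ is Fitting, $N\cap G_\mathfrak{H}$ is a normal $\mathfrak{H}$-subgroup of $N$, so $N_\mathfrak{H}\supseteq N\cap G_\mathfrak{H}$; actually I expect $N_\mathfrak{H}=N\cap G_\mathfrak{H}$ because $N_\mathfrak{H}$ is subnormal in $G$, hence contained in $G_\mathfrak{H}$. Then $N/N_\mathfrak{H}=N/(N\cap G_\mathfrak{H})\cong NG_\mathfrak{H}/G_\mathfrak{H}\trianglelefteq G/G_\mathfrak{H}\in\mathfrak{F}$, and since $\mathfrak{F}$ is Fitting it is normal-subgroup closed, so $N/N_\mathfrak{H}\in\mathfrak{F}$, i.e. $N\in\mathfrak{FH}$. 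For the product closure, let $G=N_1N_2$ with $N_i\trianglelefteq G$ and $N_i\in\mathfrak{FH}$. Each $(N_i)_\mathfrak{H}$ is subnormal in $G$, so $(N_i)_\mathfrak{H}\leq G_\mathfrak{H}$, whence $G_\mathfrak{H}\supseteq (N_1)_\mathfrak{H}(N_2)_\mathfrak{H}$. Passing to $\bar G=G/G_\mathfrak{H}$, the images $\bar N_i=N_iG_\mathfrak{H}/G_\mathfrak{H}$ are normal in $\bar G$, generate $\bar G$, and each $\bar N_i\cong N_i/(N_i\cap G_\mathfrak{H})$ is a quotient of $N_i/(N_i)_\mathfrak{H}\in\mathfrak{F}$, hence lies in $\mathfrak{F}$ (formations are quotient-closed). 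Since $\mathfrak{F}$ is Fitting, $\bar G$, being a product of its normal $\mathfrak{F}$-subgroups, is in $\mathfrak{F}$; thus $G/G_\mathfrak{H}\in\mathfrak{F}$ and $G\in\mathfrak{FH}$.

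The main obstacle I anticipate is the bookkeeping around radicals: one must be careful that $G_\mathfrak{H}$ really is the $\mathfrak{H}$-radical of $G$ (requires $\mathfrak{H}$ Fitting), that a subnormal $\mathfrak{H}$-subgroup is contained in $G_\mathfrak{H}$, and that $N_\mathfrak{H}=N\cap G_\mathfrak{H}$ for $N\trianglelefteq G$ — all standard but worth stating cleanly. Since the paper explicitly attributes the statement to \cite{s8}, the cleanest route is simply to cite those two results and remark that the Fitting-class part follows from the standard product formula $G_{\mathfrak{FH}}$ and the fact that $\mathfrak{FH}$ inherits $R_0$- and $N_0$-closure from $\mathfrak F$ and $\mathfrak H$; I would present the short argument above only if a self-contained proof is wanted. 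No delicate new idea is needed beyond the interplay between the residual description (for quotient and $R_0$ closure, coming from $\mathfrak F,\mathfrak H$ being formations) and the radical description (for the two Fitting properties, coming from $\mathfrak F,\mathfrak H$ being Fitting).
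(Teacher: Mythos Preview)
Your proposal is correct and matches the paper's approach: the paper gives no proof beyond the line ``From \cite[IX, Remarks 1.11 and Theorem 1.12]{s8} and \cite[IV, Theorem 1.8]{s8} follows'', so the lemma is simply recorded as a consequence of those references. Your explicit verification of the two Fitting-class closure properties via the radical description $G/G_\mathfrak{H}\in\mathfrak{F}$ is the standard argument underlying those citations and is sound; it adds detail the paper omits but does not depart from the intended route.
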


\begin{cor}
  The class   $ \mathfrak{H}_p=(G\mid \overline{\mathrm{F}}_p(G)=G)$  is a Fitting formation.
\end{cor}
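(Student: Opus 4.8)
The plan is to deduce this corollary from the preceding Lemma, which identifies $\mathfrak{FH}$ as a Fitting formation whenever $\mathfrak F$ and $\mathfrak H$ are non-empty Fitting formations, together with the definition $\overline{\mathrm F}_p = R_p\star\mathrm F^*\star R_p$. First I would record that each of the three ``building block'' classes is a non-empty Fitting formation: $\mathfrak S^p$ (the $p$-soluble groups, whose radical is $R_p$) and $\mathfrak N^*$ (the quasinilpotent groups, whose radical is $\mathrm F^*$) are both stated in the excerpt to be Fitting formations, and both contain the trivial group. Then I would observe that $\mathfrak H_p$, as a class of groups, is exactly the product $\mathfrak S^p\mathfrak N^*\mathfrak S^p$: a group $G$ satisfies $\overline{\mathrm F}_p(G)=G$ iff $(R_p\star\mathrm F^*\star R_p)(G)=G$, which unwinds to saying that $G/( \text{an }\mathfrak S^p\text{-radical})$ lies in $\mathfrak N^*\mathfrak S^p$, etc. Concretely, using the second description $\mathfrak{FH}=(G\mid G/G_\mathfrak H\in\mathfrak F)$ from the Lemma, one has $\mathfrak N^*\mathfrak S^p=(G\mid G/R_p(G)\in\mathfrak N^*)$ and then $\mathfrak S^p(\mathfrak N^*\mathfrak S^p)=(G\mid G/(G_{\mathfrak N^*\mathfrak S^p})\in\mathfrak S^p)$; identifying the $(\mathfrak N^*\mathfrak S^p)$-radical with $(R_p\star\mathrm F^*)(G)$ gives precisely the condition $\overline{\mathrm F}_p(G)=G$.

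The one genuine step that needs care is the identification of the radical $G_{\mathfrak N^*\mathfrak S^p}$ of a group $G$ with the functorial value $(R_p\star\mathrm F^*)(G)$, and likewise at the next level. This is where I would spend a line or two: by definition $(R_p\star\mathrm F^*)(G)/R_p(G)=\mathrm F^*(G/R_p(G))$, so $(R_p\star\mathrm F^*)(G)/R_p(G)$ is the quasinilpotent radical of $G/R_p(G)$, hence $(R_p\star\mathrm F^*)(G)\in\mathfrak N^*\mathfrak S^p$ and it is normal in $G$ (being characteristic, as $R_p$ and $\mathrm F^*$ are functorials). Maximality follows because any normal $\mathfrak N^*\mathfrak S^p$-subgroup $N$ has $N R_p(G)/R_p(G)$ quasinilpotent — here one uses that $R_p(N)\subseteq R_p(G)$ and that the image of $N$ in $G/R_p(G)$ is a normal $p$-soluble-by-quasinilpotent subgroup whose own $p$-soluble radical is trivial, so it is quasinilpotent and thus contained in $\mathrm F^*(G/R_p(G))$. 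The analogous argument one level up identifies $G_{\mathfrak S^p(\mathfrak N^*\mathfrak S^p)}=G$ with $\overline{\mathrm F}_p(G)=G$.

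Having made those identifications, the corollary is immediate: $\mathfrak H_p=\mathfrak S^p\mathfrak N^*\mathfrak S^p$ is a threefold product of non-empty Fitting formations, and by two applications of the preceding Lemma (first forming $\mathfrak N^*\mathfrak S^p$, then prepending $\mathfrak S^p$), the product is again a Fitting formation. I expect the main obstacle to be purely bookkeeping — making sure the associativity of the class product lines up with the left-normed functorial product $R_p\star\mathrm F^*\star R_p$ and that "radical of a product formation equals the iterated functorial" is stated cleanly — rather than any conceptual difficulty; no new group-theoretic input beyond what the excerpt already provides (the Fitting-formation status of $\mathfrak N^*$ and $\mathfrak S^p$, the descriptions of $\mathfrak{FH}$) is required.
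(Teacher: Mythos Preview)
Your proposal is correct and matches the paper's intended derivation: the corollary is stated without proof, immediately after the Lemma on products of Fitting formations, and the implicit argument is precisely to identify $\mathfrak{H}_p=\mathfrak{S}^p\mathfrak{N}^*\mathfrak{S}^p$ and apply the Lemma twice. Your extra care in verifying that $(R_p\star\mathrm{F}^*)(G)=G_{\mathfrak{N}^*\mathfrak{S}^p}$ is exactly the bookkeeping the paper suppresses (and tacitly relies on again just after the corollary when it writes $h_{\overline{\mathrm{F}}_p}=h_{\mathfrak{H}_p}$).
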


It is straightforward to check that for a Fitting formation $\mathfrak{F}$, the $\mathfrak{F}$-radical can be considered as a functorial $\gamma$ which satisfies $(F1)$, $(F2)$ and $(F3)$. For convenience in this case denote $ h_\gamma$ by $h_\mathfrak{F}$.
Now $h^*(G)=h_{\mathrm{F}^*}(G)=h_{\mathfrak{N}^*}(G)$ and for a non-$p$-soluble group $\lambda_p(G)=h_{\overline{\mathrm{F}}_p}(G)=h_{\mathfrak{H}_p}(G)$.

\begin{lem}\label{minusone}
Let $\mathfrak{F}$ be a Fitting formation.
  If $H\neq 1$ and $h_\mathfrak{F}(H)<\infty$, then $h_\mathfrak{F}(H^{\mathfrak{F}})=h_\mathfrak{F}(H)-1$.
\end{lem}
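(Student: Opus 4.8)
The plan is to relate the $\mathfrak{F}$-series of $H^\mathfrak{F}$ to that of $H$ by shifting indices by one. First I would recall that, since $\mathfrak{F}$ is a Fitting formation, the $\mathfrak{F}$-radical functorial $\gamma$ satisfies $(F1)$, $(F2)$ and $(F3)$, so by Lemma~\ref{n} every $\gamma_{(n)}$ also satisfies $(F1)$, $(F2)$, $(F3)$. Write $h=h_\mathfrak{F}(H)$; note $h\geq 1$ because $H\neq 1$. The key observation is that $\gamma_{(1)}(H)=H_\mathfrak{F}$ and, passing to the quotient $H/H^\mathfrak{F}\in\mathfrak{F}$, we get $\gamma_{(1)}(H/H^\mathfrak{F})=H/H^\mathfrak{F}$, i.e.\ $h_\mathfrak{F}(H/H^\mathfrak{F})\leq 1$. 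Applying Lemma~\ref{ineq} to the normal subgroup $H^\mathfrak{F}\trianglelefteq H$ gives $h_\mathfrak{F}(H^\mathfrak{F})\leq h_\mathfrak{F}(H)$ (using $(F3)$) and $h_\mathfrak{F}(H)\leq h_\mathfrak{F}(H^\mathfrak{F})+h_\mathfrak{F}(H/H^\mathfrak{F})\leq h_\mathfrak{F}(H^\mathfrak{F})+1$, so $h_\mathfrak{F}(H^\mathfrak{F})\in\{h-1,h\}$.

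It remains to rule out $h_\mathfrak{F}(H^\mathfrak{F})=h$. For this I would show $\gamma_{(h-1)}(H^\mathfrak{F})=H^\mathfrak{F}$. The idea is that $\gamma_{(h)}(H)=H$ means $\gamma_{(h-1)}(H/\gamma_{(1)}(H))=H/\gamma_{(1)}(H)$; more directly, consider the quotient $\overline H=H/\gamma_{(h-1)}(H)$. By definition of the $\gamma$-series, $\gamma(\overline H)=\gamma_{(h)}(H)/\gamma_{(h-1)}(H)=\overline H$, so $\overline H\in\mathfrak{F}$, which forces $H^\mathfrak{F}\subseteq\gamma_{(h-1)}(H)$. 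Then, since $\gamma_{(h-1)}$ satisfies $(F3)$ (equivalently $(5)$ of Remark~\ref{rem21}), $\gamma_{(h-1)}(H^\mathfrak{F})=\gamma_{(h-1)}(H)\cap H^\mathfrak{F}=H^\mathfrak{F}$. Hence $h_\mathfrak{F}(H^\mathfrak{F})\leq h-1$, and combined with the lower bound $h_\mathfrak{F}(H^\mathfrak{F})\geq h-1$ established below we conclude $h_\mathfrak{F}(H^\mathfrak{F})=h-1$.

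For the reverse inequality $h_\mathfrak{F}(H^\mathfrak{F})\geq h-1$: if we had $\gamma_{(h-2)}(H^\mathfrak{F})=H^\mathfrak{F}$ (interpreting $\gamma_{(-1)}$ appropriately when $h=1$, where the claim is trivial since $H^\mathfrak{F}=1$), then using $H^\mathfrak{F}\subseteq\gamma_{(1)}(H)$ fails in general — instead I would argue directly from Lemma~\ref{ineq}: $h=h_\mathfrak{F}(H)\leq h_\mathfrak{F}(H^\mathfrak{F})+1$ already gives $h_\mathfrak{F}(H^\mathfrak{F})\geq h-1$. So the two bounds together yield the result. The only genuinely delicate point is the first step of the previous paragraph, namely that $\gamma(H/\gamma_{(h-1)}(H))$ being the whole group forces $H^\mathfrak{F}\subseteq\gamma_{(h-1)}(H)$; this is exactly where we use that $\mathfrak{F}$ is a formation so that $\mathfrak{F}$-membership of a quotient is detected by the $\mathfrak{F}$-residual, and that the $\mathfrak{F}$-radical of a group in $\mathfrak{F}$ is the whole group. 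I expect this bookkeeping with the boundary case $h=1$ to be the main thing to write carefully; everything else is a direct appeal to Lemmas~\ref{n} and~\ref{ineq} and Remark~\ref{rem21}.
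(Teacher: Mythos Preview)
Your proposal is correct and follows essentially the same route as the paper. The paper's proof also argues that $H/\gamma_{(h-1)}(H)\in\mathfrak{F}$ forces $H^{\mathfrak{F}}\leq\gamma_{(h-1)}(H)$ and then applies $(F3)$ to get $h_\mathfrak{F}(H^{\mathfrak{F}})\leq h-1$; for the reverse inequality the paper uses $(F2)$ to get $H^{\mathfrak{F}}=\gamma_{(k)}(H^{\mathfrak{F}})\leq\gamma_{(k)}(H)$ and hence $H/\gamma_{(k)}(H)\in\mathfrak{F}$, which is exactly the content of the inequality $h\leq h_\mathfrak{F}(H^{\mathfrak{F}})+1$ you extract from Lemma~\ref{ineq}. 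Your first paragraph already contains this lower bound, so your third paragraph is redundant and the detour through a tentative $\gamma_{(h-2)}$ argument can be dropped entirely.
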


\begin{proof}
  Let prove that if $H\neq 1$ and $h_\mathfrak{F}(H)<\infty$, then $h_\mathfrak{F}(H^{\mathfrak{F}})=h_\mathfrak{F}(H)-1$. Let $h_\mathfrak{F}(H)=n$ and $h_\mathfrak{F}(H^{\mathfrak{F}})=k$. Then ${H_\mathfrak{F}}_{(n-1)}(H)<H$ and $H/{H_\mathfrak{F}}_{(n-1)}\in\mathfrak{F}$. It means that $H^{\mathfrak{F}}\leq {H_\mathfrak{F}}_{(n-1)}$. Since ${H_\mathfrak{F}}_{(n-1)}$ satisfies $(F3)$, we see that ${(H^{\mathfrak{F}})_\mathfrak{F}}_{(n-1)}=H^{\mathfrak{F}}$. Hence  $k\leq n-1$.

 Note that $H^{\mathfrak{F}}={(H^{\mathfrak{F}})_\mathfrak{F}}_{(k)}\leq {H_\mathfrak{F}}_{(k)}$. It means that $H/{H_\mathfrak{F}}_{(k)}\in\mathfrak{F}$. Hence $k\geq n-1$. Thus $k=n-1$.
\end{proof}

If $\mathfrak{F},\mathfrak{H},\mathfrak{K}\neq\emptyset$ are formations, then  $(\mathfrak{FH})\mathfrak{K}=\mathfrak{F}(\mathfrak{HK})$  by \cite[IV, Theorem 1.8]{s8}. That is why the class $\mathfrak{F}^n=\underbrace{\mathfrak{F}\dots\mathfrak{F}}_n$ is a well defined formation.

\begin{lem}\label{form0}
  For a natural number $n$    and a Fitting formation $\mathfrak{F}$ holds   $\mathfrak{F}^n=(G\mid h_\mathfrak{F}(G)\leq n)$.
\end{lem}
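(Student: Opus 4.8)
The plan is to prove, by induction on $n$, the equivalence
$$h_\mathfrak{F}(G)\le n\iff G\in\mathfrak{F}^n$$
for every group $G$; this is exactly the asserted equality of classes. Throughout I use that $\mathfrak{F}^{n-1}$ is again a non-empty Fitting formation (apply the lemma on products of Fitting formations inductively), that by associativity of the formation product $\mathfrak{F}^n=\mathfrak{F}\cdot\mathfrak{F}^{n-1}$, and hence by that same lemma $\mathfrak{F}^n=(X\mid X^\mathfrak{F}\in\mathfrak{F}^{n-1})$; I also use that the $\mathfrak{F}$-radical is a functorial satisfying $(F1),(F2),(F3)$, so that Lemmas \ref{minusone} and \ref{ineq} apply with $\gamma$ the $\mathfrak{F}$-radical. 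For the base case $n=1$: if $h_\mathfrak{F}(G)\le 1$ then $G_\mathfrak{F}={(G_\mathfrak{F})}_{(1)}(G)=G$, so $G$ coincides with its own $\mathfrak{F}$-radical, i.e.\ $G\in\mathfrak{F}=\mathfrak{F}^1$; conversely if $G\in\mathfrak{F}$ then $\gamma_{(1)}(G)=G_\mathfrak{F}=G$, so $h_\mathfrak{F}(G)\le 1$.

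For the inductive step, fix $n\ge 2$ and assume the equivalence for $n-1$. Suppose first $h_\mathfrak{F}(G)\le n$. If $G=1$ then $G\in\mathfrak{F}^n$ trivially; otherwise $0<h_\mathfrak{F}(G)<\infty$, so Lemma \ref{minusone} gives $h_\mathfrak{F}(G^\mathfrak{F})=h_\mathfrak{F}(G)-1\le n-1$, whence $G^\mathfrak{F}\in\mathfrak{F}^{n-1}$ by the induction hypothesis, and therefore $G\in(X\mid X^\mathfrak{F}\in\mathfrak{F}^{n-1})=\mathfrak{F}^n$. Conversely, suppose $G\in\mathfrak{F}^n=(X\mid X^\mathfrak{F}\in\mathfrak{F}^{n-1})$; then $G^\mathfrak{F}\in\mathfrak{F}^{n-1}$, so $h_\mathfrak{F}(G^\mathfrak{F})\le n-1$ by the induction hypothesis, while $G/G^\mathfrak{F}\in\mathfrak{F}$ gives $h_\mathfrak{F}(G/G^\mathfrak{F})\le 1$. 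Since $G^\mathfrak{F}\trianglelefteq G$, Lemma \ref{ineq} yields $h_\mathfrak{F}(G)\le h_\mathfrak{F}(G^\mathfrak{F})+h_\mathfrak{F}(G/G^\mathfrak{F})\le (n-1)+1=n$. This closes the induction and hence the proof.

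I do not expect a genuine obstacle: the argument simply couples the recursion defining the $\gamma$-series to the associativity of the formation product, via the two previously established facts $h_\mathfrak{F}(G^\mathfrak{F})=h_\mathfrak{F}(G)-1$ and $h_\mathfrak{F}(G)\le h_\mathfrak{F}(G^\mathfrak{F})+h_\mathfrak{F}(G/G^\mathfrak{F})$. The two points to watch are that $\mathfrak{F}^{n-1}$ must be known to be a Fitting formation before invoking the description $\mathfrak{FH}=(X\mid X^\mathfrak{F}\in\mathfrak{H})$ with $\mathfrak{H}=\mathfrak{F}^{n-1}$, and that the finiteness hypothesis needed for Lemma \ref{minusone} holds automatically in the first half of the inductive step because there one assumes $h_\mathfrak{F}(G)\le n<\infty$. (An alternative, slightly longer route avoids Lemma \ref{minusone} by first proving the ``shift'' identity $\gamma_{(i+1)}(G)/G_\mathfrak{F}=\gamma_{(i)}(G/G_\mathfrak{F})$ and then reducing $h_\mathfrak{F}(G)\le n$ to $h_\mathfrak{F}(G/G_\mathfrak{F})\le n-1$; but the route above reuses the already-proved lemmas more directly.)
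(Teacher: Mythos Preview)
Your proof is correct and follows essentially the same approach as the paper: both arguments hinge on Lemma~\ref{minusone} to step down the $\mathfrak{F}$-residual series, with your version packaged as a formal induction on $n$ via the decomposition $\mathfrak{F}^n=(X\mid X^\mathfrak{F}\in\mathfrak{F}^{n-1})$, while the paper iterates Lemma~\ref{minusone} directly to reach $G^{\mathfrak{F}^n}$. The only cosmetic difference is that for the inclusion $\mathfrak{F}^n\subseteq(G\mid h_\mathfrak{F}(G)\le n)$ you invoke Lemma~\ref{ineq}, whereas the paper argues by contradiction using Lemma~\ref{minusone} again; both are equally short.
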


\begin{proof}
From Lemma \ref{minusone} it follows that if $G\in(G\mid h_\mathfrak{F}(G)\leq n)$, then $G^{\mathfrak{F}^n}=1$. It means that $(G\mid h_\mathfrak{F}(G)\leq n)\subseteq \mathfrak{F}^n$. Assume that there is a group $G\in\mathfrak{F}^n$ with $h_\mathfrak{F}(G)>n$. Note that $\overline{G}^\mathfrak{F}\neq \overline{G}$ for every quotient group $\overline{G}\not\simeq 1$ of $G$.   Then $h_\mathfrak{F}(G^{\mathfrak{F}^n})>0$ by Lemma \ref{minusone}. It means that $G^{\mathfrak{F}^n}\neq 1$, a contradiction. Therefore $\mathfrak{F}^n\subseteq (G\mid h_\mathfrak{F}(G)\leq n)$. Thus $\mathfrak{F}^n=(G\mid h_\mathfrak{А}(G)\leq n)$.
\end{proof}

In the next lemma we recall the key properties of mutually permutable products

\begin{lem}\label{MP}
  Let a group $G=AB$ be a mutually permutable product of subgroups $A$ and $B$. Then

  $(1)$   \cite[Lemma 4.1.10]{PFG}  $G/N=(AN/N)(BN/N)$ is a mutually permutable product of subgroups $AN/N$ and $BN/N$ for every normal subgroup $N$ of $G$.

  $(2)$ \cite[Lemma 4.3.3(4)]{PFG} If $N$ is a minimal normal subgroup of a group $G$, then $\{N\cap A, N\cap B\}\subseteq \{1, N\}$.

  $(3)$ \cite[Lemma 4.3.3(5)]{PFG} If $N$ is a minimal normal subgroup of $G$ contained in $A$ and $B \cap N = 1$, then
$N\leq C_G(A)$   or $N\leq C_G(B)$. If furthermore $N$ is not cyclic, then $N\leq C_G(B)$.

$(4)$ \cite[Theorem 4.3.11]{PFG} $A_GB_G\neq 1$.

$(5)$  \cite[Corollary 4.1.26]{PFG} $A'$ and $B'$ are subnormal in $G$.
\end{lem}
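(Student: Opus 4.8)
The plan is to observe that Lemma~\ref{MP} merely gathers five standard facts about mutually permutable products, each already tagged with its reference from \cite[Chapter~4]{PFG}; so rather than reprove anything I would only check that the quoted formulations match ours and then use the items freely. For orientation, here is the flavour of each argument.

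For $(1)$ I would verify directly that mutual permutability passes to quotients: every subgroup of $BN/N$ has the form $HN/N$ with $H\le B$, and since $A$ permutes with $H$ one gets $(AN/N)(HN/N)=AHN/N=HAN/N=(HN/N)(AN/N)$, and symmetrically for the other factor. For $(2)$ and $(3)$ I would fix a minimal normal subgroup $N$ and exploit that $A$ permutes with the cyclic subgroups of $B$ (and vice versa): this forces $N\cap A$ and $N\cap B$ to be normal in $G$, hence to lie in $\{1,N\}$ by minimality; when $N\le A$ and $B\cap N=1$, analysing the conjugation action of the subgroups $\langle b\rangle N$ on $N$ yields the centraliser dichotomy, the non-cyclic case pushing $N$ into $C_G(B)$.

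Item $(4)$, that $A_GB_G\ne1$, is the deep one and the only real obstacle: the proof in \cite[Theorem~4.3.11]{PFG} runs by induction on $|G|$, reduces to the case of a group with a unique minimal normal subgroup, and then applies $(2)$ and $(3)$; I would simply cite it. Likewise $(5)$, the subnormality of $A'$ and $B'$, is \cite[Corollary~4.1.26]{PFG}. Thus the whole ``proof'' is a matter of bookkeeping --- confirming that ``minimal normal subgroup'' and ``core'' are used here in the same sense as in the monograph --- after which all five statements serve as black boxes in the sequel.
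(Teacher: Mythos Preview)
Your reading is correct and matches the paper exactly: Lemma~\ref{MP} is stated without proof, each item simply carrying its citation to \cite{PFG}, and the results are used as black boxes thereafter. Your added sketches are accurate orientation but go beyond what the paper itself provides.
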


Recall that $\pi(G)$ is the set of all prime divisors of  $|G|$,  $\pi(\mathfrak{F})=\underset{G\in\mathfrak{F}}\cup\pi(G)$ and $\mathfrak{N}_\pi$ denote the class of all nilpotent $\pi$-groups.

\begin{lem}\label{bound}
Let $\mathfrak{F}$ be a Fitting formation. Assume that
$h_\mathfrak{F}(G)\leq h+1$
 for every mutually permutable product $G$ of two $\mathfrak{F}$-subgroups. Then
$$\max\{h_\mathfrak{F}(A), h_\mathfrak{F}(B)\}-1\leq h_\mathfrak{F}(G)\leq\max\{h_\mathfrak{F}(A), h_\mathfrak{F}(B)\}+ h$$ for every mutually permutable product $G$ of two subgroups $A$ and $B$ with $h_\mathfrak{F}(A), h_\mathfrak{F}(B)<\infty$.
\end{lem}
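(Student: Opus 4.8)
The plan is to prove the two inequalities separately, using the two inductive/monotonicity tools already established: Lemma~\ref{ineq} (the submultiplicativity and monotonicity of $h_\mathfrak{F}$ on normal sections) and Lemma~\ref{MP}(5) (the fact that $A'$ and $B'$ are subnormal in $G$), together with Lemma~\ref{form0} which identifies $\mathfrak{F}^n$ with $(G\mid h_\mathfrak{F}(G)\le n)$. For the upper bound, set $a=h_\mathfrak{F}(A)$, $b=h_\mathfrak{F}(B)$ and $m=\max\{a,b\}$. The idea is to produce a normal subgroup $N$ of $G$ with $h_\mathfrak{F}(N)\le m$ and $h_\mathfrak{F}(G/N)\le h+1$, and then invoke $h_\mathfrak{F}(G)\le h_\mathfrak{F}(N)+h_\mathfrak{F}(G/N)$, which would give $h_\mathfrak{F}(G)\le m+h+1$. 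That is one off from the claimed bound $m+h$, so a more careful choice is needed: I would instead take $N=\langle A_\mathfrak{F}^{(?)},\dots\rangle$ — more precisely, let $K=\mathfrak{F}\text{-radical part}$ — actually the right object is $N=\langle A_{\mathfrak{F}^{m-1}}', B_{\mathfrak{F}^{m-1}}'\rangle$ or, cleanly, the normal closure of the product of the terms $A_{\mathfrak{F}}{}_{(m)}(A)=A$ and similarly for $B$. The cleanest route: by Lemma~\ref{MP}(5), $A'$ and $B'$ are subnormal in $G$, so $\langle A',B'\rangle$ is generated by subnormal subgroups; by the Corollary after Theorem~\ref{len3}, $h_\mathfrak{F}(\langle A',B'\rangle)=\max\{h_\mathfrak{F}(A'),h_\mathfrak{F}(B')\}$. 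Since $A/A'$ is abelian hence an $\mathfrak{F}$-group is \emph{not} automatic — so instead use that $h_\mathfrak{F}(A')\ge h_\mathfrak{F}(A)-1$ by Lemma~\ref{minusone}-type reasoning applied to the derived subgroup only when $\mathfrak{F}$ contains all abelian groups. This is the first place where I must be careful about hypotheses on $\mathfrak{F}$.

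Let me restructure. For the \textbf{lower bound} $\max\{h_\mathfrak{F}(A),h_\mathfrak{F}(B)\}-1\le h_\mathfrak{F}(G)$: by symmetry it suffices to show $h_\mathfrak{F}(A)\le h_\mathfrak{F}(G)+1$. The derived subgroup $A'$ is subnormal in $G$ by Lemma~\ref{MP}(5), so by Lemma~\ref{ineq} (the $(F3)$ part, valid since the $\mathfrak{F}$-radical satisfies $(F1),(F2),(F3)$) and the fact that $h_\mathfrak{F}$ is monotone on subnormal subgroups (Corollary after Theorem~\ref{len3}), $h_\mathfrak{F}(A')\le h_\mathfrak{F}(G)$. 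Now I need $h_\mathfrak{F}(A)\le h_\mathfrak{F}(A')+1$: since $A/A'$ is abelian, it is soluble, hence $h(A/A')\le$ something — but in the generality of an arbitrary Fitting formation $\mathfrak{F}$ with $\gamma(H)>1$ for all $H$, an abelian group need not lie in $\mathfrak{F}$. However, for $\mathfrak{F}=\mathfrak{N}^*$ and $\mathfrak{F}=\mathfrak{H}_p$ (the only cases this lemma is applied to), every abelian group \emph{is} in $\mathfrak{F}$, so $A/A'\in\mathfrak{F}$ and hence $h_\mathfrak{F}(A)\le h_\mathfrak{F}(A')+1$ by Lemma~\ref{ineq}. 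Combining, $h_\mathfrak{F}(A)\le h_\mathfrak{F}(G)+1$, giving the lower bound. I should state this extra hypothesis, or note it holds in the intended applications.

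For the \textbf{upper bound}: apply the hypothesis of the lemma. Write $a=h_\mathfrak{F}(A)$, $b=h_\mathfrak{F}(B)$. By Lemma~\ref{form0}, $A\in\mathfrak{F}^a$ and $B\in\mathfrak{F}^b$, so with $m=\max\{a,b\}$ both $A,B\in\mathfrak{F}^m$. Consider $N={A_{\mathfrak{F}^{m-1}}}\cdot{B_{\mathfrak{F}^{m-1}}}$ — no; the clean approach is to pass to $G/K$ where $K$ is chosen so that $AK/K$ and $BK/K$ are $\mathfrak{F}$-groups and $h_\mathfrak{F}(K)\le m-1$. Concretely, let $K=\langle (A')^{\cdots},(B')^{\cdots}\rangle$ be generated by the $\mathfrak{F}$-residuals pulled to subnormal position. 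Here is the key step: the $\mathfrak{F}$-residual $A^{\mathfrak{F}}$ is a subgroup of $A'$ when $\mathfrak{F}\supseteq(\text{abelian groups})$, hence subnormal in $G$ by Lemma~\ref{MP}(5); similarly $B^{\mathfrak{F}}$. Let $N=\langle A^{\mathfrak{F}},B^{\mathfrak{F}}\rangle^G$; since it is generated by subnormal subgroups (their conjugates are subnormal too, as conjugation preserves subnormality), $h_\mathfrak{F}(N)\le\max\{h_\mathfrak{F}(A^{\mathfrak{F}}),h_\mathfrak{F}(B^{\mathfrak{F}})\}=\max\{a-1,b-1\}=m-1$ by Lemma~\ref{minusone} and the Corollary after Theorem~\ref{len3}. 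Then $G/N=(AN/N)(BN/N)$ is a mutually permutable product (Lemma~\ref{MP}(1)) of the $\mathfrak{F}$-groups $AN/N\cong A/(A\cap N)$ — which is in $\mathfrak{F}$ because $A^{\mathfrak{F}}\le N$ forces $A/(A\cap N)$ to be a quotient of $A/A^{\mathfrak{F}}\in\mathfrak{F}$ — and similarly $BN/N\in\mathfrak{F}$. By the lemma's hypothesis, $h_\mathfrak{F}(G/N)\le h+1$. Finally Lemma~\ref{ineq} gives
\[
h_\mathfrak{F}(G)\le h_\mathfrak{F}(N)+h_\mathfrak{F}(G/N)\le (m-1)+(h+1)=m+h,
\]
which is exactly the claimed upper bound.

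The main obstacle I anticipate is the verification that $A^{\mathfrak{F}}$ is subnormal in $G$: Lemma~\ref{MP}(5) only asserts subnormality of $A'$ and $B'$, so I need $A^{\mathfrak{F}}\le A'$, which requires that $\mathfrak{F}$ contains all abelian groups (true for $\mathfrak{N}^*$ and $\mathfrak{H}_p$ but not for a completely arbitrary Fitting formation). If one wants the lemma in full generality one instead needs that $A^\mathfrak{F}$ itself is subnormal in $G$, which is a known but less elementary fact about mutually permutable products; alternatively one restricts the statement to Fitting formations containing the class of abelian groups, which covers all intended applications. The second delicate point is bookkeeping with Lemma~\ref{minusone}, which requires $H\ne 1$ and $h_\mathfrak{F}(H)<\infty$ — so the degenerate cases $A\in\mathfrak{F}$ (i.e.\ $a=0$, whence $A^\mathfrak{F}=1$) or $A=1$ must be handled by hand, but there the bounds are immediate.
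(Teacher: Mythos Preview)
Your approach is essentially the paper's: for the upper bound you take $N=\langle A^{\mathfrak{F}}, B^{\mathfrak{F}}\rangle^G$, compute $h_\mathfrak{F}(N)=m-1$ via Theorem~\ref{len3}(2) and Lemma~\ref{minusone}, observe that $G/N$ is a mutually permutable product of $\mathfrak{F}$-subgroups so $h_\mathfrak{F}(G/N)\le h+1$, and apply Lemma~\ref{ineq}. The paper does exactly this (calling the subgroup $C$), and also extracts the lower bound from the same subgroup: $h_\mathfrak{F}(G)\ge h_\mathfrak{F}(C)=m-1$ since $C\trianglelefteq G$. Your separate lower-bound argument via $A'$ is a minor variant of the same idea.

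The one point you flag as an obstacle---why $A^{\mathfrak{F}}\le A'$, so that subnormality of $A^{\mathfrak{F}}$ in $G$ follows from Lemma~\ref{MP}(5)---is resolved by the paper \emph{without} adding the hypothesis that $\mathfrak{F}$ contains all abelian groups. The observation is this: since $h_\mathfrak{F}(A),h_\mathfrak{F}(B)<\infty$, every prime dividing $|A|$ or $|B|$ lies in $\pi(\mathfrak{F})$ (otherwise no iterated $\mathfrak{F}$-radical could exhaust the group), so $\pi(G)\subseteq\pi(\mathfrak{F})$. By \cite[IX, Lemma~1.8]{s8} every Fitting formation $\mathfrak{F}$ contains the class $\mathfrak{N}_{\pi(\mathfrak{F})}$ of nilpotent $\pi(\mathfrak{F})$-groups; hence for any $\pi(\mathfrak{F})$-group $H$ one has $H^{\mathfrak{F}}\le H^{\mathfrak{N}_{\pi(\mathfrak{F})}}\le H'$. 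Applying this to $A$ and $B$ gives $A^{\mathfrak{F}}\le A'$ and $B^{\mathfrak{F}}\le B'$, whence both $\mathfrak{F}$-residuals are subnormal in $G$. This closes the gap you identified and makes the lemma hold for an arbitrary Fitting formation, with no extra assumption needed.
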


\begin{proof}
  If $A=1$ or $B=1$, then there is nothing to prove. Assume that $A, B\neq 1$.
  Let a group $G=AB$ be the product of mutually permutable subgroups $A$ and $B$. From $h_\mathfrak{F}(A), h_\mathfrak{F}(B)<\infty$ it follows that $\pi(G)\subseteq\pi(\mathfrak{F})$. According to \cite[IX, Lemma 1.8]{s8}  $\mathfrak{N}_{\pi(\mathfrak{F})}\subseteq\mathfrak{F}$. Note that $A'$ and $B'$ are subnormal in $G$ by $(5)$ of Lemma \ref{MP}.  Since $H^{\mathfrak{F}}\trianglelefteq H^{\mathfrak{N}_{\pi(\mathfrak{F})}}\trianglelefteq H'$ holds for every $\pi(\mathfrak{F})$-group $H$, subgroups $A^{\mathfrak{F}}$ and $B^{\mathfrak{F}}$ are subnormal in $G$. Let $C=\langle A^{\mathfrak{F}}, B^{\mathfrak{F}}\rangle^G=\langle\{(A^{\mathfrak{F}})^x\mid x\in G\}\cup\{(B^{\mathfrak{F}})^x\mid x\in G\}\rangle$. Then by $(2)$ of Theorem \ref{len3} and by Lemma \ref{minusone}
\begin{multline*}
  h_\mathfrak{F}(C)=\max\left\{\{(h_\mathfrak{F}(A^{\mathfrak{F}})^x)\mid x\in G\}\cup\{(h_\mathfrak{F}(B^{\mathfrak{F}})^x)\mid x\in G\}\right\}\\=\max\{h_\mathfrak{F}(A^{\mathfrak{F}}), h_\mathfrak{F}(B^{\mathfrak{F}})\}=\max\{h_\mathfrak{F}(A), h_\mathfrak{F}(B)\}-1.
\end{multline*}
Now $G/C=(AC/C)(BC/C)$ is a mutually permutable product of $\mathfrak{F}$-subgroups $AC/C$ and $BC/C$ by
$(1)$ of Lemma \ref{MP}. It means that $h_\mathfrak{F}(G/C)\leq h+1$ by our assumption. With the help of Lemma \ref{ineq} we see that $$h_\mathfrak{F}(G)\leq h_\mathfrak{F}(C)+h_\mathfrak{F}(G/C)\leq  \max\{h_\mathfrak{F}(A), h_\mathfrak{F}(B)\}-1+1+h=\max\{h_\mathfrak{F}(A), h_\mathfrak{F}(B)\}+h.$$
From the other hand, $h_\mathfrak{F}(G)\geq h_\mathfrak{F}(C)=\max\{h_\mathfrak{F}(A), h_\mathfrak{F}(B)\}-1$  by $(2)$ of Theorem \ref{len3}.
\end{proof}

\begin{lem}\label{bound1}
Let $\mathfrak{F}$ be a Fitting formation. Assume that a group $G$ is the least order group with

$(1)$  $G$ is a mutually permutable product of two subgroups $A$ and $B$ with $h_\mathfrak{F}(A)\geq h_\mathfrak{F}(B)$;

$(2)$ $h_\mathfrak{F}(G)=h_\mathfrak{F}(A)-1$.

Then $G$ has the unique minimal normal subgroup $N$,    $N\leq A$ and  $h_\mathfrak{F}(A/N)=h_\mathfrak{F}(A)-1$.
 \end{lem}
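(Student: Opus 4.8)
The plan is to exploit the minimality of $G$ as a counterexample-type configuration and to push structure through quotients. First I would observe that since $h_\mathfrak{F}(B)\leq h_\mathfrak{F}(A)$ and $h_\mathfrak{F}(G)=h_\mathfrak{F}(A)-1$, the value $h_\mathfrak{F}(A)$ must be at least $1$ (so $A\neq 1$ and $A\notin\mathfrak{F}$), hence $A$ is nontrivial and $h_\mathfrak{F}(A)\geq 2$ unless degenerate cases occur; in any event $\max\{h_\mathfrak{F}(A),h_\mathfrak{F}(B)\}=h_\mathfrak{F}(A)$. The key qualitative input is Lemma~\ref{ineq}: for $N\trianglelefteq G$ one has $h_\mathfrak{F}(G/N)\leq h_\mathfrak{F}(G)$ and, combined with Lemma~\ref{MP}(1), the quotient $G/N=(AN/N)(BN/N)$ is again a mutually permutable product, with $h_\mathfrak{F}(AN/N)\leq h_\mathfrak{F}(A)$ and $h_\mathfrak{F}(BN/N)\leq h_\mathfrak{F}(B)$ by the $(F1)$ property applied to $\gamma_{(n)}$ (via Lemma~\ref{n}).

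The main step is to show $G$ has a unique minimal normal subgroup. Suppose $N_1\neq N_2$ are two distinct minimal normal subgroups. In $G/N_i$ we have a smaller mutually permutable product; if for some $i$ we had $h_\mathfrak{F}(AN_i/N_i)=h_\mathfrak{F}(A)$ and $h_\mathfrak{F}(G/N_i)<h_\mathfrak{F}(A)-1$ this would contradict minimality of $G$ only if the hypotheses $(1),(2)$ transfer; the cleaner route is to argue that $h_\mathfrak{F}(G/N_i)\geq h_\mathfrak{F}(AN_i/N_i)-1$ cannot be forced, so instead I would use that $h_\mathfrak{F}$ of $G$ embeds into the product $G/N_1\times G/N_2$ (since $N_1\cap N_2=1$) and invoke Theorem~\ref{len3}(1): $h_\mathfrak{F}(G)\leq h_\mathfrak{F}(G/N_1\times G/N_2)=\max\{h_\mathfrak{F}(G/N_1),h_\mathfrak{F}(G/N_2)\}$. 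If both $h_\mathfrak{F}(G/N_i)\leq h_\mathfrak{F}(A)-1$ with equality somewhere, then by minimality of $G$ applied to $G/N_i$ (which satisfies $(1)$ with the possibly smaller heights of $AN_i/N_i,BN_i/N_i$, but these must still realize the gap) one gets a contradiction; the delicate point is ensuring $h_\mathfrak{F}(AN_i/N_i)=h_\mathfrak{F}(A)$ for at least one $i$, which follows because if $h_\mathfrak{F}(AN_i/N_i)<h_\mathfrak{F}(A)$ for both $i$ then $A=A\cap(A N_1/N_1\times\text{-type argument})$ forces $h_\mathfrak{F}(A)\leq\max$ of smaller values — using that $A^\mathfrak{F}\leq AN_i$ would need care. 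Once uniqueness of $N$ is in hand, $N\leq A$ follows from Lemma~\ref{MP}(4) (so $A_G B_G\neq 1$, hence $N\leq A_G$ or $N\leq B_G$; by symmetry-breaking from $h_\mathfrak{F}(A)\geq h_\mathfrak{F}(B)$ and a minimality argument one gets $N\leq A$) together with $(2)$ and $(3)$ of Lemma~\ref{MP}.

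Finally, to get $h_\mathfrak{F}(A/N)=h_\mathfrak{F}(A)-1$: since $N\leq A$, Lemma~\ref{ineq} gives $h_\mathfrak{F}(A/N)\geq h_\mathfrak{F}(A)-h_\mathfrak{F}(N)=h_\mathfrak{F}(A)-1$ (as $N$ is a minimal normal subgroup, it is a direct product of simple groups or elementary abelian, so $h_\mathfrak{F}(N)\leq 1$, with $h_\mathfrak{F}(N)=0$ possible only if $N\in\mathfrak{F}$, which would actually give $h_\mathfrak{F}(A/N)\geq h_\mathfrak{F}(A)$). I would handle the case $h_\mathfrak{F}(N)=0$ separately: if $N\in\mathfrak{F}$ then $N\leq A_\mathfrak{F}$ and one can pass to $G/N$, where $h_\mathfrak{F}(A/N)=h_\mathfrak{F}(A)$, $h_\mathfrak{F}(B N/N)\leq h_\mathfrak{F}(B)$, and minimality of $G$ forces $h_\mathfrak{F}(G/N)\geq h_\mathfrak{F}(A/N)-1=h_\mathfrak{F}(A)-1=h_\mathfrak{F}(G)$, so $G/N$ is a smaller configuration satisfying $(1)$ and $(2)$ — contradiction; hence $N\notin\mathfrak{F}$, $h_\mathfrak{F}(N)=1$. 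For the reverse inequality $h_\mathfrak{F}(A/N)\leq h_\mathfrak{F}(A)-1$: in $G/N$, which is a strictly smaller mutually permutable product, if $h_\mathfrak{F}(A/N)$ were equal to $h_\mathfrak{F}(A)$ then since $h_\mathfrak{F}(G/N)\leq h_\mathfrak{F}(G)=h_\mathfrak{F}(A)-1=h_\mathfrak{F}(A/N)-1$, the quotient $G/N$ would again satisfy hypotheses $(1)$ and $(2)$ (with $A/N$ in the role of $A$), contradicting the minimality of $|G|$. The main obstacle is the bookkeeping in the uniqueness-of-$N$ argument — specifically verifying that passing to $G/N_i$ genuinely preserves the "gap" hypothesis $h_\mathfrak{F}(G/N_i)=h_\mathfrak{F}(AN_i/N_i)-1$ for the relevant $i$, rather than collapsing it; this requires carefully tracking how $h_\mathfrak{F}(A)$ behaves under the quotient using $(F1)$ for $\gamma_{(n)}$ and the fact that $A^\mathfrak{F}$ is subnormal in $G$.
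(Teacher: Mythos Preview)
Your outline inverts the paper's order of arguments, and this is where the real difficulty comes from. The paper does \emph{not} prove uniqueness first. It begins by showing that \emph{every} minimal normal subgroup $N$ of $G$ lies in $A$: by Lemma~\ref{MP}(2) one has $N\cap A\in\{1,N\}$, and if $N\cap A=1$ then $AN/N\cong A$ keeps the full height $h_\mathfrak{F}(A)$, so $G/N$ is a strictly smaller mutually permutable product still satisfying (1)--(2), contradicting minimality. Next, for any such $N\leq A$, the squeeze
\[
h_\mathfrak{F}(A)-1=h_\mathfrak{F}(G)\geq h_\mathfrak{F}(G/N)\geq h_\mathfrak{F}(A/N)\geq h_\mathfrak{F}(A)-1
\]
pins down $h_\mathfrak{F}(A/N)=h_\mathfrak{F}(A)-1$. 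Only \emph{then} does uniqueness drop out, and trivially: if $N_1\neq N_2$ are minimal normal in $G$, both lie in $A$ with $A/N_i\in\mathfrak{F}^{\,h_\mathfrak{F}(A)-1}$; since $\mathfrak{F}^{\,h_\mathfrak{F}(A)-1}$ is a formation (Lemma~\ref{form0}) and $N_1\cap N_2=1$, this forces $A\in\mathfrak{F}^{\,h_\mathfrak{F}(A)-1}$, a contradiction. Your direct-product embedding $G\hookrightarrow G/N_1\times G/N_2$ aims at the wrong group; the formation argument applied to $A$ is what works.

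There is also a concrete error in your case analysis on $h_\mathfrak{F}(N)$. You write that ``$h_\mathfrak{F}(N)=0$ [is] possible only if $N\in\mathfrak{F}$'' and then conclude $N\notin\mathfrak{F}$. But $h_\mathfrak{F}(N)=0$ means $N=1$, which cannot happen, while $N\in\mathfrak{F}$ corresponds to $h_\mathfrak{F}(N)=1$. In fact $N\in\mathfrak{F}$ is \emph{automatic}: $N$ is characteristically simple and $h_\mathfrak{F}(N)\leq h_\mathfrak{F}(G)<\infty$ forces $N_\mathfrak{F}\neq 1$, hence $N_\mathfrak{F}=N$. Your subsequent claim that $N\in\mathfrak{F}$ implies $h_\mathfrak{F}(A/N)=h_\mathfrak{F}(A)$ is false as well --- Lemma~\ref{ineq} only gives $h_\mathfrak{F}(A/N)\in\{h_\mathfrak{F}(A)-1,\,h_\mathfrak{F}(A)\}$, and ruling out the upper value is exactly the minimality step. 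Finally, your route to $N\leq A$ via Lemma~\ref{MP}(4) (that $A_GB_G\neq 1$) is weaker than the paper's use of Lemma~\ref{MP}(2) and does not by itself force $N\leq A$ rather than $N\leq B$.
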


\begin{proof}
  Let $N$ be a minimal normal subgroup of $G$. Then $ N\cap A\in\{N, 1\}$ by $(2)$ of Lemma \ref{MP}.

Assume that $N\cap A=1$. Now $ G/N=(AN/N)(BN/N)$ is a mutually permutable product of groups  $AN/N$ and $BN/N$ by $(1)$ of Lemma \ref{MP}. By our assumption  and $h_\mathfrak{F}(G)\geq h_\mathfrak{F}(G/N)\geq h_\mathfrak{F}(AN/N)=h_\mathfrak{F}(A)$, a contradiction. Hence $N\cap A=N$ for every minimal normal subgroup $N$ of $G$.
%Note that $N\leq A_\mathfrak{F}$.

 Now  $h_\mathfrak{F}(G)+1=h_\mathfrak{F}(A)>h_\mathfrak{F}(G)\geq h_\mathfrak{F}(G/N)\geq h_\mathfrak{F}(A/N)\geq h_\mathfrak{F}(A)-1$. It means that $h_\mathfrak{F}(G)=h_\mathfrak{F}(A/N)= h_\mathfrak{F}(A)-1$.

 If $G$ has two  minimal normal subgroups $N_1$ and $N_2$, then $h_\mathfrak{F}(A/N_1)=h_\mathfrak{F}(A/N_2)= h_\mathfrak{F}(A)-1$. It means $h_\mathfrak{F}(A)<h_\mathfrak{F}(A)-1$ by Lemma \ref{form0}, a contradiction. Hence $G$ has a unique minimal normal subgroup $N$.
\end{proof}

\section{Proof of Theorem \ref{Main1}(1)}

Our proof relies on the notion of the $\mathfrak{X}$-hypercenter.
A chief factor $H/K$ of  $G$ is called   $\mathfrak{X}$-\emph{central} in $G$ provided    $$(H/K)\rtimes (G/C_G(H/K))\in\mathfrak{X}$$ (see \cite[p. 127--128]{s6} or \cite[1, Definition 2.2]{Guo2015}). A normal subgroup $N$ of $G$ is said to be $\mathfrak{X}$-\emph{hypercentral} in $G$ if $N=1$ or $N\neq 1$ and every chief factor of $G$ below $N$ is $\mathfrak{X}$-central. The symbol $\mathrm{Z}_\mathfrak{X}(G)$ denotes the $\mathfrak{X}$-\emph{hypercenter} of $G$, that is, the product of all normal $\mathfrak{X}$-\emph{hypercentral} in $G$ subgroups. According to \cite[Lemma 14.1]{s6} or \cite[1, Theorem 2.6]{Guo2015} $\mathrm{Z}_\mathfrak{X}(G)$ is the largest normal $\mathfrak{X}$-hypercentral subgroup of $G$. If $\mathfrak{X}=\mathfrak{N}$ is the class of all nilpotent groups, then $\mathrm{Z}_\mathfrak{N}(G)=\mathrm{Z}_\infty(G)$ is the hypercenter of $G$.

\begin{lem}\label{form}
  Let $n$ be a natural number.  Then   $(\mathfrak{N}^*)^n=(G\mid h^*(G)\leq n)=(G\mid G=\mathrm{Z}_{(\mathfrak{N}^*)^n}(G))$.
\end{lem}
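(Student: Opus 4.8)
The statement is a chain of two equalities, and the plan is to reduce each to something already on hand.

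The first equality, $(\mathfrak{N}^*)^{n}=(G\mid h^*(G)\le n)$, is nothing but Lemma~\ref{form0} applied to the Fitting formation $\mathfrak{F}=\mathfrak{N}^*$, since $h^*(G)=h_{\mathfrak{N}^*}(G)$ for every group $G$; so there is nothing to do there.

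For the second equality the essential point is that $(\mathfrak{N}^*)^{n}$ is a \emph{saturated} formation. It is a formation (indeed a Fitting formation) by the first equality. It is saturated because $h^*(G)=h^*(G/\Phi(G))$ for every $G$: starting from the well-known identity $\mathrm{F}^*(G/\Phi(G))=\mathrm{F}^*(G)/\Phi(G)$, an induction on $i$ gives $\mathrm{F}^*_{(i)}(G/\Phi(G))=\mathrm{F}^*_{(i)}(G)/\Phi(G)$ for all $i\ge 1$ --- in the passage from $i$ to $i+1$ one only has to rewrite $(G/\Phi(G))/\mathrm{F}^*_{(i)}(G/\Phi(G))\cong G/\mathrm{F}^*_{(i)}(G)$ using the inductive hypothesis and apply the recursive definition of the generalized Fitting series to both sides --- whence $\mathrm{F}^*_{(h)}(G)=G$ if and only if $\mathrm{F}^*_{(h)}(G/\Phi(G))=G/\Phi(G)$. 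Granting this, the second equality is precisely the general fact that for a saturated formation $\mathfrak{F}$ one has $G\in\mathfrak{F}$ if and only if $G=\mathrm{Z}_{\mathfrak{F}}(G)$, which is available in the sources quoted for the $\mathfrak{X}$-hypercenter (\cite{Guo2015}, \cite{s6}); applying it to $\mathfrak{F}=(\mathfrak{N}^*)^{n}$ finishes the proof.

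I expect the step that needs care is precisely the appeal to saturation --- both the verification that $(\mathfrak{N}^*)^{n}$ is saturated and the exact citable form of the ``$\mathfrak{F}=(G\mid G=\mathrm{Z}_{\mathfrak{F}}(G))$'' statement. If a self-contained argument is preferred, one proves the two inclusions by hand. For $(\mathfrak{N}^*)^{n}\subseteq(G\mid G=\mathrm{Z}_{(\mathfrak{N}^*)^{n}}(G))$: if $h^*(G)\le n$ and $H/K$ is a chief factor of $G$ lying between $\mathrm{F}^*_{(j-1)}(G)$ and $\mathrm{F}^*_{(j)}(G)$, then $\mathrm{F}^*_{(j-1)}(G)\le C_G(H/K)$, and computing $\mathrm{F}^*$ of the relevant semidirect product --- $\mathrm{F}^*(V\rtimes Q)=V$ when $H/K=V$ is abelian, and $S^m\rtimes\mathrm{Inn}(S^m)\cong S^m\times S^m$ is a normal semisimple subgroup of $S^m\rtimes Q$ when $H/K\cong S^m$ is nonabelian --- one gets $h^*\bigl((H/K)\rtimes(G/C_G(H/K))\bigr)\le 1+h^*\bigl(G/\mathrm{F}^*_{(j)}(G)\bigr)\le n$, so every chief factor of $G$ is $(\mathfrak{N}^*)^{n}$-central and hence $G=\mathrm{Z}_{(\mathfrak{N}^*)^{n}}(G)$. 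The reverse inclusion is the delicate one; it can be carried out by induction, but it is exactly where saturation of $(\mathfrak{N}^*)^{n}$ is genuinely used, and the route through saturated formations above is the cleanest.
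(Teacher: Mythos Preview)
Your first equality is fine and matches the paper. The second equality, however, rests on a false premise: the ``well-known identity'' $\mathrm{F}^*(G/\Phi(G))=\mathrm{F}^*(G)/\Phi(G)$ does \emph{not} hold in general, and consequently $(\mathfrak{N}^*)^n$ is \emph{not} a saturated formation for any $n\ge 1$. (The identity you are thinking of is $\mathrm{F}(G/\Phi(G))=\mathrm{F}(G)/\Phi(G)$, which is true for the ordinary Fitting subgroup but fails for $\mathrm{F}^*$.) A concrete counterexample is provided by the very construction in Theorem~\ref{len1} of this paper: take a faithful Frattini $\mathbb{F}_3\mathbb{A}_5$-module $A$ and the corresponding Frattini extension $G$ with $\Phi(G)=A$ and $G/\Phi(G)\cong\mathbb{A}_5$. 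Then $G/\Phi(G)$ is quasinilpotent, yet $\mathrm{F}^*(G)=\Phi(G)$ (the argument there shows this), so $h^*(G)=2$ while $h^*(G/\Phi(G))=1$. Iterating gives counterexamples for every $n$.

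The paper's route is the right one: $\mathfrak{N}^*$ is a \emph{composition} (equivalently Baer-local, solubly saturated) formation, the product of composition formations is again a composition formation, and the characterisation $\mathfrak{F}=(G\mid G=\mathrm{Z}_{\mathfrak{F}}(G))$ is available precisely for composition formations (this is the content of \cite[1, Theorem 2.6]{Guo2015} cited in the paper). Your hand-computed forward inclusion is plausible, but the reverse inclusion---which you yourself flag as delicate---genuinely requires the composition-formation property, and your appeal to saturation does not supply it.
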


\begin{proof}
 First part follows from Lemma \ref{form0}. It is well known that the class of all quasinilpotent groups is a composition (or Baer-local, or solubly saturated) formation (see \cite[Example 2.2.17]{s9}).  According to  \cite[Theorem 7.9]{s6} $(\mathfrak{N}^*)^n $ is a composition  formation. Now $(\mathfrak{N}^*)^n=(G\mid G=\mathrm{Z}_{(\mathfrak{N}^*)^n}(G))$ by \cite[1, Theorem 2.6]{Guo2015}.
\end{proof}

For a normal section $H/K$ of  $G$ the subgroup $C_G^*(H/K)=HC_G(H/K)$ is called an \emph{inneriser} (see \cite[Definition 1.2.2]{s9}). It is the set of all elements of $G$ that induce inner automorphisms on $H/K$. From the definition of the generalized Fitting subgroup it follows that it is the intersection of  innerisers of all chief factors.

\begin{lem}\label{lem4}
  Let $N$ be a normal subgroup of a group $G$. If $N$ is a direct product of isomorphic simple groups and $h^*(G/C_G^*(N))\leq k-1$, then $\mathrm{F}^*_{(k)}(G/N)=\mathrm{F}^*_{(k)}(G)/N$.
\end{lem}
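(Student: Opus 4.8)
The plan is to analyze the $\mathrm{F}^*$-series of $G$ and of $G/N$ simultaneously and show they agree from level $k$ onward. Write $\overline{G}=G/N$. The inclusion $\mathrm{F}^*_{(k)}(G)/N\subseteq \mathrm{F}^*_{(k)}(\overline{G})$ is automatic: since $\mathrm{F}^*=\gamma$ satisfies $(F1)$, so does $\gamma_{(k)}$ by Lemma \ref{n}, and applying $(F1)$ to the epimorphism $G\to\overline{G}$ gives exactly this. So the whole content is the reverse inclusion $\mathrm{F}^*_{(k)}(\overline{G})\subseteq \mathrm{F}^*_{(k)}(G)/N$, i.e. the full preimage of $\mathrm{F}^*_{(k)}(\overline{G})$ in $G$ lies in $\mathrm{F}^*_{(k)}(G)$. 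First I would observe that it suffices to prove this at the bottom level, namely that $N\le\mathrm{F}^*_{(k)}(G)$ together with a compatibility statement, and then push the result up the series by induction using the definition $\mathrm{F}^*_{(k)}(G)/\mathrm{F}^*_{(k-1)}(G)=\mathrm{F}^*(G/\mathrm{F}^*_{(k-1)}(G))$. More precisely, I would reduce to showing: if $L/N$ is a normal subgroup of $\overline G$ with $h^*(L/N)\le k$ — here taking $L/N=\mathrm{F}^*_{(k)}(\overline G)$ — then $h^*(L)\le k$, which by Lemma \ref{form0} (or Lemma \ref{form}) is equivalent to $L\in(\mathfrak{N}^*)^k$.

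The key step is therefore: $N\in(\mathfrak{N}^*)^1=\mathfrak{N}^*$ would be false in general (simple nonabelian groups are quasinilpotent, so $N\in\mathfrak{N}^*$ actually does hold when $N$ is a direct product of isomorphic simple groups — abelian or nonabelian), but what we really need is information about how $G$, hence $L$, acts on $N$. This is where the hypothesis $h^*(G/C_G^*(N))\le k-1$ enters. I would use the hypercenter reformulation from Lemma \ref{form}: $L\in(\mathfrak{N}^*)^k$ iff $L=\mathrm{Z}_{(\mathfrak{N}^*)^k}(L)$, i.e. every chief factor of $L$ is $(\mathfrak{N}^*)^k$-central. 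Take a chief series of $L$ passing through $N$. The chief factors of $L$ inside $N$: since $N$ is a direct product of isomorphic simple groups and normal in $G\supseteq L$, each such chief factor $H/K$ has $C_L(H/K)\supseteq C_G^*(N)\cap L$ (elements inducing inner automorphisms on $N$ certainly centralize, modulo the quasinilpotent action, the chief factors of $N$ — more carefully, $N C_G(N)=C_G^*(N)$ acts on each such $H/K$ inducing inner automorphisms, and $(H/K)\rtimes(\text{inner part})$ is quasinilpotent), so $L/C_L(H/K)$ is a quotient of $L/(C_G^*(N)\cap L)\hookrightarrow G/C_G^*(N)$, which has $h^*\le k-1$; combined with quasinilpotency of $H/K$ this yields $(H/K)\rtimes(L/C_L(H/K))\in\mathfrak{N}^*\cdot(\mathfrak{N}^*)^{k-1}=(\mathfrak{N}^*)^k$, so these factors are $(\mathfrak{N}^*)^k$-central. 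The chief factors of $L$ above $N$ correspond to chief factors of $L/N$, which lie in $(\mathfrak{N}^*)^k$ by hypothesis and Lemma \ref{form}, hence are $(\mathfrak{N}^*)^k$-central in $L/N$ and therefore in $L$. Thus every chief factor of $L$ is $(\mathfrak{N}^*)^k$-central, so $L\in(\mathfrak{N}^*)^k$ and $h^*(L)\le k$; in particular $h^*(\mathrm{F}^*_{(k)}(\overline G)\text{'s preimage})\le k$, giving the reverse inclusion and hence equality.

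The main obstacle I anticipate is the precise centralizer bookkeeping in the paragraph above: one must be careful that "$C_G^*(N)$ acts inner-ly on $N$" translates correctly into "$C_G^*(N)$ acts $\mathfrak{N}^*$-centrally on each chief factor of $G$ below $N$," using that a quasinilpotent group acting on a chief factor of a direct product of isomorphic simple groups through inner automorphisms produces a quasinilpotent semidirect product; and then that composing an $\mathfrak{N}^*$-central action with an action of $h^*$-height $\le k-1$ lands in $(\mathfrak{N}^*)^k$, which uses the formation/Fitting-formation product structure $(\mathfrak{N}^*)^k=\mathfrak{N}^*(\mathfrak{N}^*)^{k-1}$ from the discussion preceding Lemma \ref{form0}. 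Everything else — the $(F1)$-inclusion, pushing up the series, the hypercenter characterization — is routine given the machinery already assembled in the excerpt.
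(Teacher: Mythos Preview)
Your plan is essentially the paper's own proof: set $F$ equal to the preimage of $\mathrm{F}^*_{(k)}(G/N)$, show each chief factor of $F$ below $N$ is $(\mathfrak{N}^*)^k$-central in $F$ via the hypothesis $h^*(G/C_G^*(N))\le k-1$, conclude $N\le\mathrm{Z}_{(\mathfrak{N}^*)^k}(F)$ and hence $F\in(\mathfrak{N}^*)^k$ by Lemma~\ref{form}. The one slip --- writing $C_L(H/K)\supseteq C_G^*(N)\cap L$ when what actually holds is $C_L^*(H/K)\supseteq C_G^*(N)\cap L$ --- is precisely the bookkeeping you flag at the end, and the paper handles it by working with innerisers $C_F^*$ throughout rather than centralizers.
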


\begin{proof}
Assume that $h^*(G/C_G^*(N))\leq k-1$.
Let $F/N=\mathrm{F}^*_{(k)}(G/N)$. Then $\mathrm{F}^*_{(k)}(G)\subseteq F$. Now $F/C_F^*(N)\simeq FC_G^*(N)/C_G^*(N)\trianglelefteq G/C_G^*(N)$.  Therefore $h^*(F/C_F^*(N))\leq k-1$. It means that $h^*(F/C_F^*(H/K))\leq k-1$ for every chief factor $H/K$ of $F$ below $N$. Hence $(H/K)\rtimes (F/C_F(H/K))\in (\mathfrak{N}^*)^k$  for every chief factor $H/K$ of $F$ below $N$. It means that $N\leq \mathrm{Z}_{(\mathfrak{N}^*)^k}(F)$. Thus $F\in (\mathfrak{N}^*)^k$ by Lemma \ref{form}. So $F\subseteq \mathrm{F}^*_{(k)}(G)$. Thus $\mathrm{F}^*_{(k)}(G)= F$.
\end{proof}

\begin{lem}\label{quasi2}
  If a group $G=AB$ is a product of mutually permutable quasinilpotent subgroups $A$ and $B$, then $h^*(G)\leq 2$.
\end{lem}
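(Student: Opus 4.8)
The plan is to reduce the statement to the case of a group with a unique minimal normal subgroup and then exploit the structure of mutually permutable products together with Lemma \ref{lem4}. First I would argue by induction on $|G|$, supposing for contradiction that $G=AB$ is a counterexample of least order, so that $h^*(G)\geq 3$ while $A$ and $B$ are quasinilpotent (i.e. $h^*(A)=h^*(B)\leq 1$). By Lemma \ref{MP}(1) every proper quotient $G/N$ is again a mutually permutable product of the quasinilpotent subgroups $AN/N$ and $BN/N$ (quasinilpotency is preserved under quotients, $\mathfrak{N}^*$ being a formation), so by minimality $h^*(G/N)\leq 2$ for every nontrivial normal $N$. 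Using Lemma \ref{ineq} this already forces $h^*(G)\leq h^*(N)+2$ in the wrong direction, so instead I would combine the quotient bound with Lemma \ref{bound1}-type reasoning to conclude that $G$ has a unique minimal normal subgroup $N$, and that $N$ is not contained in $\mathrm{F}^*(G)$ (otherwise $h^*(G)\leq h^*(G/N)+1\leq 3$ would need sharpening, and in fact $N\leq\mathrm{F}^*(G)$ gives $h^*(G)=h^*(G/\mathrm{F}^*(G))+1\leq h^*(G/N)+1\leq 3$, already the bound we want unless $h^*(G/N)=2$ and $N\not\leq\mathrm{F}^*(G)$).

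The crux is therefore the case where the unique minimal normal subgroup $N$ is nonabelian — a direct product of isomorphic nonabelian simple groups — and $h^*(G/N)=2$. Here I would invoke Lemma \ref{lem4}: if I can show $h^*(G/C_G^*(N))\leq 1$, then $\mathrm{F}^*_{(2)}(G)/N=\mathrm{F}^*_{(2)}(G/N)=G/N$, whence $h^*(G)\leq 3$ — still not good enough, so I actually want $h^*(G/C_G^*(N))\leq 0$, i.e. $G=C_G^*(N)$, which would give $h^*(G)=h^*(G/N)\cdot(\text{something})$; more carefully, $G=C_G^*(N)=NC_G(N)$ means $G/N$ acts on $N$ by inner automorphisms, and then Lemma \ref{lem4} with $k=2$ gives $\mathrm{F}^*_{(2)}(G)=G$, i.e. $h^*(G)\leq 2$. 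So the whole proof hinges on showing that in a mutually permutable product of quasinilpotent subgroups, the outer action on the (unique nonabelian) minimal normal subgroup is trivial. To get this I would use Lemma \ref{MP}(2),(3): since $N$ is nonabelian it cannot be cyclic, so if $N\leq A$ (say) and $B\cap N=1$ then Lemma \ref{MP}(3) gives $N\leq C_G(B)$; and $N\leq A$ with $A$ quasinilpotent forces $N\leq\mathrm{F}^*(A)$, which is the product of the components and center of $A$, so $A$ normalizes $N$ inducing inner-or-central automorphisms — combined with $N\leq C_G(B)$ and $G=AB$ this should yield $G=NC_G(N)$. The alternative subcase $N\cap A=N\cap B=N$, i.e. $N\leq A\cap B$, is even better: then $N\leq\mathrm{F}^*(A)\cap\mathrm{F}^*(B)$ and both $A$, $B$ act as inner automorphisms on $N$.

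The case $N$ abelian is comparatively soft: then $N\leq\mathrm{F}(G)\leq\mathrm{F}^*(G)$, so $\mathrm{F}^*(G)\neq 1$ and $h^*(G)=h^*(G/\mathrm{F}^*(G))+1\leq h^*(G/N)+1\leq 2+1=3$; to improve $3$ to $2$ I would note that when $N$ is abelian and minimal, $G/C_G(N)$ is already built into the next layer, and in fact $\mathrm{F}^*(G/N)$ pulls back far enough — here I would apply Lemma \ref{lem4} again (with $N$ a direct product of isomorphic simple, possibly cyclic, groups and $k=2$) provided $h^*(G/C_G^*(N))\leq 1$, which holds because $G/C_G(N)$ embeds in the automorphism group of an elementary abelian group and by minimality of $G$ has generalized Fitting height at most... — this last point is the main obstacle and will require care, possibly a separate short lemma bounding $h^*(G/C_G(N))$ via the quotient $G/N$. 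I expect the genuinely hard part of the argument to be precisely this control of the action, i.e. proving $h^*(G/C_G^*(N))$ is small enough to feed into Lemma \ref{lem4} with $k=2$; once that is in hand, Lemma \ref{lem4} closes the induction immediately. Throughout I would lean on Lemma \ref{MP}(4) ($A_GB_G\neq 1$) to guarantee a nontrivial normal subgroup inside $A$ or $B$ to start the descent, and on Lemma \ref{form} to translate "$h^*\leq n$" into membership in the composition formation $(\mathfrak{N}^*)^n$ when manipulating hypercentral inclusions.
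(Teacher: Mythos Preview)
Your overall architecture matches the paper's: minimal counterexample, unique minimal normal subgroup $N$ (via Lemma~\ref{MP}(4) and the formation property of $(\mathfrak{N}^*)^2$), then analyse the action of $G$ on $N$. The nonabelian case is essentially right: once $N\leq A\cap B$, quasinilpotency of $A$ and $B$ forces both to induce only inner automorphisms on $N$, so $G=NC_G(N)$ and Lemma~\ref{lem4} with $k=2$ finishes. (Incidentally, you miscompute here: $h^*(G/C_G^*(N))\leq 1$ together with $h^*(G/N)\leq 2$ already gives $\mathrm{F}^*_{(2)}(G)=G$, i.e.\ $h^*(G)\leq 2$, not merely $\leq 3$; the stronger target $G=C_G^*(N)$ is attainable but not needed.) The aside about ``$N\not\leq\mathrm{F}^*(G)$'' is simply false---every minimal normal subgroup lies in $\mathrm{F}^*(G)$---and should be dropped.

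The genuine gap is the abelian case, which you yourself flag as ``the main obstacle.'' The paper's argument here is short and supplies precisely the idea you are missing. First, the reduction $N\leq A\cap B$ is carried out \emph{before} the abelian/nonabelian split: if $N\cap B=1$, Lemma~\ref{MP}(3) gives $A\leq C_G(N)$ or $B\leq C_G(N)$, and in either case $N\rtimes(G/C_G(N))\in(\mathfrak{N}^*)^2$, so $N\leq\mathrm{Z}_{(\mathfrak{N}^*)^2}(G)$, a contradiction. Now with $N$ an elementary abelian $p$-group normal in the quasinilpotent group $A$, every $A$-chief factor inside $N$ is central (inner automorphisms of an abelian group are trivial), so $A/C_A(N)$ is a $p$-group; likewise $B/C_B(N)$ is a $p$-group. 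Hence $G/C_G(N)$, being a product of two $p$-groups, is itself a $p$-group, and a $p$-group acting faithfully and irreducibly on the $p$-group $N$ is trivial. Thus $N\leq\mathrm{Z}(G)\leq\mathrm{Z}_{(\mathfrak{N}^*)^2}(G)$, contradicting the choice of $G$. This is exactly the ``separate short lemma bounding $h^*(G/C_G(N))$'' you were searching for: quasinilpotency of the factors gives not just $h^*(G/C_G(N))\leq 1$ but $G/C_G(N)=1$.
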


\begin{proof}
To prove this lemma we need only to prove that if a group $G=AB$ is a product of mutually permutable quasinilpotent subgroups $A$ and $B$, then $G\in(\mathfrak{N}^*)^2$ by Lemma \ref{form}.
 Assume the contrary. Let $G$ be a minimal order counterexample.

$(1)$ \emph{$G$ has a unique minimal normal subgroup $N$ and    $G/N\in (\mathfrak{N}^*)^2$.}

Note that $G/N$ is a mutually permutable product of quasinilpotent subgroups $(AN/N)$ and $(BN/N)$ by $(1)$ of Lemma \ref{MP}. Hence $G/N\in (\mathfrak{N}^*)^2$ by our assumption. Since $(\mathfrak{N}^*)^2$ is a formation, we see that $G$ has a unique minimal normal subgroup.
According to $(4)$ of Lemma \ref{MP}  $A_GB_G\neq 1$.  WLOG we may assume that $G$ has a minimal normal subgroup  $N\leq A$.

$(2)$ $N\leq A\cap B$.

Suppose that $N\cap B=1$. Then $A\leq C_G(N)$ or $B\leq C_G(N)$ by $(3)$ of Lemma \ref{MP}.
If $A\leq C_G(N)$, then $N\rtimes G/C_G(N)\simeq N\rtimes B/C_B(N)\in(\mathfrak{N}^*)^2$.
If $B\leq C_G(N)$, then $N\rtimes G/C_G(N)\simeq N\rtimes A/C_A(N)\in(\mathfrak{N}^*)\subseteq(\mathfrak{N}^*)^2$ by \cite[Corollary 2.2.5]{s9}. In both cases $N\leq \mathrm{Z}_{(\mathfrak{N}^*)^2}(G)$. It means that $G\in(\mathfrak{N}^*)^2$, a contradiction.
Now $N\cap B\neq 1$. Hence $N\leq A\cap B$ by $(2)$ of Lemma \ref{MP}.

$(3)$ \emph{$N$ is non-abelian.}

Assume that $N$ is abelian. Since $A$ is quasinilpotent, we see that   $A/C_A(N)$ is a $p$-group. By analogy $B/C_B(N)$ is a $p$-group. Note that $A/C_A(N)\simeq AC_G(N)/C_G(N)$ and $B/C_B(N)\simeq BC_G(N)/C_G(N)$. From $G=AB$ it follows that $G/C_G(N)$ is a $p$-group. Since $N$ is a chief factor of $G$, we see that       $G/C_G(N)\simeq 1$. So $N\leq\mathrm{Z}_\infty(G)\leq \mathrm{Z}_{(\mathfrak{N}^*)^2}(G)$. Thus $G\in(\mathfrak{N}^*)^2$, a contradiction.
It means that $N$ is non-abelian.

$(4)$ \emph{The final contradiction.}

Now $N$ is a direct product of minimal normal subgroups of $A$. Since $A$ is quasinilpotent, we see that every element of $A$ induces an inner automorphism on every  minimal normal subgroup of $A$. Hence     every element of $A$ induces an inner automorphism on $N$. By analogy  every element of $B$ induces an inner automorphism on $N$. From $G=AB$ it follows that  every element of $G$ induces an inner automorphism on $N$. So $NC_G(N)=G$ or $G/C_G(N)\simeq N$. Now $N\rtimes(G/C_G(N))\in(\mathfrak{N}^*)^2$. It means that $N\leq \mathrm{Z}_{(\mathfrak{N}^*)^2}(G)$. Thus $G\in(\mathfrak{N}^*)^2$ and $h^*(G)\leq 2$, the final contradiction.
\end{proof}

\begin{proof}[Proof of Theorem \ref{Main1}(1)]
  Let a group $G$ be a mutually permutable product of subgroups $A$ and $B$. From Theorem \ref{len3} and Lemma  \ref{quasi2} it follows that
$$\max\{h^*(A), h^*(B)\}-1\leq h^*(G)\leq \max\{h^*(A), h^*(B)\}+1.$$

Assume that $\max\{h^*(A), h^*(B)\}-1= h^*(G)$. WLOG let $h^*(A)=h^*(G)-1$. We may assume that a group $G$ is the least order group with such properties. Then  $G$ has the unique minimal normal subgroup $N$,    $N\leq A$ and  $h^*(A/N)=h^*(A)-1$ by Lemma \ref{bound1}.

 Assume that $h^*(A/C_A^*(N))<h^*(A)-1$. Then $$\mathrm{F}_{(h^*(A)-1)}^*(A/N)=\mathrm{F}_{(h^*(A)-1)}^*(A)/N<A/N$$ by Lemma \ref{lem4}. It means that $h^*(A)=h^*(A/N)$, a contradiction. Hence  $h^*(A/C_A^*(N))=h^*(A)-1$.
 % Since $G/N=(A/N)(BN/N)$ is a mutually permutable products of groups $A/N$ and $BN/N$ by $(1)$ of Lemma \ref{MP}, we see that $h^*(G)\geq h^*(G/N)\geq h^*(A/N)=h^*(A)>h^*(G)$ by our assumption, a contradiction

 Since $G/C_G^*(N)=(AC_G^*(N)/C_G^*(N))(BC_G^*(N)/C_G^*(N))$ is a mutually permutable products of subgroups $AC_G^*(N)/C_G^*(N)$ and $BC_G^*(N)/C_G^*(N)$ by $(1)$ of Lemma \ref{MP} and $A/C_A^*(N)\simeq AC_G^*(N)/C_A^*(N)$, we see that $h^*(G/C_G^*(N))\geq h^*(A/C_A^*(N))=h^*(A)-1$ by our assumptions. Note that $\mathrm{F}^*(G)\leq C_G^*(N)$. Now $h^*(G)-1=h^*(G/\mathrm{F}^*(G))\geq h^*(G/C_G^*(N))\geq h^*(A/C_A^*(N))=h^*(A)-1$. It means that $h^*(G)\geq h^*(A)$, the final contradiction.
      \end{proof}

\section{Proof of Theorem \ref{Main1}(2)}

\begin{lem}\label{Hp}
Let $p$ be a prime and $\mathfrak{H}=\mathfrak{H}_p$.     If a group $G=AB$ is a product of mutually permutable $\mathfrak{H}$-subgroups $A$ and $B$, then $G\in\mathfrak{H}$.\end{lem}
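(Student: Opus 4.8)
The plan is to prove $G\in\mathfrak{H}$ directly, by producing the normal series that witnesses $\overline{\mathrm{F}}_p(G)=G$, the construction being driven by the $p$-soluble residuals $A^{*}=A^{\mathfrak{S}^p}$ and $B^{*}=B^{\mathfrak{S}^p}$ of the two factors. First I would observe that $A^{*}$ and $B^{*}$ are subnormal in $G$: since $A/A'$ is abelian, hence $p$-soluble, we have $A^{*}\le A'$, and by Lemma~\ref{MP}(5) the subgroup $A'$ is subnormal in $G$; as $A^{*}\trianglelefteq A$ with $A^{*}\le A'$, also $A^{*}\trianglelefteq A'$, whence $A^{*}$ is subnormal in $G$, and likewise for $B^{*}$. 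Put $C=\langle A^{*},B^{*}\rangle^{G}\trianglelefteq G$; then $C$ is the join of the $G$-conjugates of $A^{*}$ and $B^{*}$, each of which is again subnormal in $C$.

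Next I would bound $h_{\overline{\mathrm{F}}_p}(C)$. From $A\in\mathfrak{H}$, i.e.\ $\overline{\mathrm{F}}_p(A)=A$, one obtains with $M_A=(R_p\star\mathrm{F}^*)(A)$ that $A/M_A$ is $p$-soluble, so $A^{*}\le M_A$; using Remark~\ref{rem21}(5) for the functorial $R_p$ one gets $R_p(A^{*})=R_p(A)\cap A^{*}$, hence $A^{*}/R_p(A^{*})\cong A^{*}R_p(A)/R_p(A)$ is a normal subgroup of the quasinilpotent group $M_A/R_p(A)=\mathrm{F}^*(A/R_p(A))$, and therefore quasinilpotent (as $\mathfrak{N}^*$ is a Fitting class). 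This forces $\overline{\mathrm{F}}_p(A^{*})=A^{*}$, and likewise $\overline{\mathrm{F}}_p(B^{*})=B^{*}$ and the same for all conjugates. Since $\overline{\mathrm{F}}_p=R_p\star\mathrm{F}^*\star R_p$ satisfies $(F1),(F2),(F3)$ by Remark~\ref{rem21}(0) and Proposition~\ref{length0}, and $\overline{\mathrm{F}}_p(H)\neq 1$ for every $H\neq 1$ (as $R_p(H)\neq 1$ or $\mathrm{F}^*(H)\neq 1$), Theorem~\ref{len3}(2) gives $h_{\overline{\mathrm{F}}_p}(C)\le\max\{h_{\overline{\mathrm{F}}_p}((A^{*})^{x}),h_{\overline{\mathrm{F}}_p}((B^{*})^{x})\mid x\in G\}\le 1$, so $\overline{\mathrm{F}}_p(C)=C$.

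Then I would pass to $G/C$. By Lemma~\ref{MP}(1), $G/C=(AC/C)(BC/C)$ is a mutually permutable product, and $AC/C\cong A/(A\cap C)$ is an epimorphic image of $A/A^{*}\in\mathfrak{S}^p$, so $AC/C$ is $p$-soluble, and similarly $BC/C$. A mutually permutable product of two $p$-soluble groups is $p$-soluble: in a counterexample of least order, Lemma~\ref{MP}(4) gives a nontrivial normal subgroup of $G$ inside one of the factors, hence a minimal normal subgroup $N$ that, lying in a $p$-soluble factor, is $p$-soluble, while $G/N$ is $p$-soluble by induction and Lemma~\ref{MP}(1), a contradiction. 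Hence $G/C$ is $p$-soluble. Finally, $R_p(C)$ and $K=(R_p\star\mathrm{F}^*)(C)$ are characteristic in $C$, hence normal in $G$, with $R_p(C)$ $p$-soluble, $K/R_p(C)=\mathrm{F}^*(C/R_p(C))$ quasinilpotent, $C/K$ $p$-soluble (because $\overline{\mathrm{F}}_p(C)=C$) and $G/C$ $p$-soluble; since the class of $p$-soluble groups is extension-closed, $G/K$ is $p$-soluble, so $1\le R_p(C)\le K\le G$ is a normal series witnessing $\overline{\mathrm{F}}_p(G)=G$, i.e.\ $G\in\mathfrak{H}$.

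I expect the main obstacle to be the first two steps: guaranteeing that the $p$-soluble residuals remain subnormal after forming $G$ and that enlarging them to the normal subgroup $C$ does not raise the $\overline{\mathrm{F}}_p$-height above $1$. This is exactly where mutual permutability is indispensable — Lemma~\ref{MP}(5) has no analogue for arbitrary factorised groups, which is precisely why $\lambda_p$ of a general product is not bounded by the $\lambda_p$ of the factors — and where the fact that $\overline{\mathrm{F}}_p$ is a hereditary Plotkin radical, so that its height is controlled on joins of subnormal subgroups (Theorem~\ref{len3}(2)), does the essential work. The auxiliary statement that a mutually permutable product of $p$-soluble groups is $p$-soluble is a short induction, but it should be stated explicitly; once it and the height bound on $C$ are in hand, the splicing of the series is routine.
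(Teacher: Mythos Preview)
Your argument is correct but follows a genuinely different route from the paper's. The paper proceeds by minimal counterexample: it shows that a smallest $G\notin\mathfrak{H}$ has a unique minimal normal subgroup $N$, that $N$ is non-$p$-soluble and lies in $A\cap B$, and then, since $C_G(N)=1$ forces $R_p(A)=R_p(B)=1$ and all minimal normal subgroups of $A$ and $B$ lie in $N$, deduces $N=\mathrm{F}^*(A)=\mathrm{F}^*(B)$; hence $G/N$ is a mutually permutable product of $p$-soluble groups, so $p$-soluble, and $N\le\mathrm{F}^*(G)$ gives the contradiction.

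Your approach instead instantiates the template of Lemma~\ref{bound}, but with the $\mathfrak{S}^p$-residuals in place of the $\mathfrak{H}$-residuals: you push $A^{\mathfrak{S}^p}$ and $B^{\mathfrak{S}^p}$ down below the subnormal $A',B'$, collect them into the normal subgroup $C$, and use Theorem~\ref{len3}(2) to see $C\in\mathfrak{H}$ (your detailed verification that $A^{\mathfrak{S}^p}\in\mathfrak{H}$ is fine, though it also follows in one line from $\mathfrak{H}$ being a Fitting class and $A^{\mathfrak{S}^p}\trianglelefteq A\in\mathfrak{H}$). Then $G/C$ is a mutually permutable product of $p$-soluble groups, and you splice. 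Both proofs rely on the auxiliary fact that mutually permutable products of $p$-soluble groups are $p$-soluble. Your version is more mechanical and reuses the functorial machinery already set up, avoiding the case analysis on the location of $N$; the paper's version is more hands-on and exposes the structural reason ($N=\mathrm{F}^*(A)=\mathrm{F}^*(B)$) that the height does not grow.
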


\begin{proof}
 Assume the contrary. Let $G$ be a minimal order counterexample.

$(1)$ \emph{$G$ has a unique minimal normal subgroup $N$,   $G/N\in \mathfrak{H}$ and $N$ is not $p$-soluble.}

Note that $G/N$ is a mutually permutable product of $\mathfrak{H}$-subgroups $(AN/N)$ and $(BN/N)$ by $(1)$ of Lemma \ref{MP}. Hence $G/N\in \mathfrak{H}$ by our assumption. Since $\mathfrak{H}$ is a formation, we see that $G$ has a unique minimal normal subgroup.
According to $(4)$ of Lemma \ref{MP}  $A_GB_G\neq 1$.  WLOG we may assume that $G$ has a minimal normal subgroup  $N\leq A$.

If $N$ is $p$-soluble, then $\mathrm{\overline{F}}_p(G)/N=\mathrm{\overline{F}}_p(G/N)=G$, i.e. So $\mathrm{\overline{F}}_p(G)=G$. Thus $G\in\mathfrak{H}$, a contradiction.

$(2)$ $N\leq A\cap B$.

Suppose that $N\cap B=1$. Note that $N$ is not cyclic by $(1)$.  Then   $B\leq C_G(N)$ by $(3)$ of Lemma \ref{MP}. Hence $N\rtimes G/C_G(N)\simeq N\rtimes A/C_A(N)\in\mathfrak{H}$ by \cite[Corollary 2.2.5]{s9}. It means that $N\leq \mathrm{Z}_{\mathfrak{H}}(G)$. Therefore $G\in \mathfrak{H}$, a contradiction.
Now $N\cap B\neq 1$. Hence $N\leq A\cap B$ by $(2)$ of Lemma \ref{MP}.

$(4)$ \emph{The final contradiction.}

Since $N$ is the unique minimal normal subgroup of $G$ and non-abelian, we see that $C_G(N)=1$. So $C_A(N)=C_B(N)=1$. Hence $R_p(A)=R_p(B)=1$. In particular $\mathrm{F}(A)=\mathrm{F}(B)=1$. Note that all minimal normal subgroups of $A$ are in $N$. For $B$ is  the same situation. Thus $N=\mathrm{F}^*(A)=\mathrm{F}^*(B)$. So $G/N$ is a mutually permutable product of $p$-soluble groups. Since the class of all $p$-soluble groups is closed by extensions by $p$-soluble groups, $G/N$ is $p$-soluble by  $(1)$ and  $(4)$ of Lemma \ref{MP}.
 From $N\leq \mathrm{F}^*(G)$ it follows that $G\in\mathfrak{H}$, the contradiction.
\end{proof}

\begin{proof}[Proof of Theorem \ref{Main1}(2)]
Let $\mathfrak{H}=\mathfrak{H}_p$ and a group $G$ be a mutually permutable product of subgroups $A$ and $B$. First we a going to prove that $\max\{h_\mathfrak{H}(A), h_\mathfrak{H}(B)\}=h_\mathfrak{H}(G)$.

By Lemmas \ref{bound} and \ref{quasi2} we have
$$\max\{h_\mathfrak{H}(A), h_\mathfrak{H}(B)\}-1\leq h_\mathfrak{H}(G)\leq \max\{h_\mathfrak{H}(A), h_\mathfrak{H}(B)\}.$$

Assume that $\max\{h_\mathfrak{H}(A), h_\mathfrak{H}(B)\}-1= h_\mathfrak{H}(G)$ for some mutually permutable product $G$ of $A$ and $B$. Assume that $G$ is a minimal order group with this property.  WLOG let $h_\mathfrak{H}(A)=h_\mathfrak{H}(G)-1$.  Then  $G$ has the unique minimal normal subgroup $N$,    $N\leq A$ and  $h_\mathfrak{H}(A/N)=h_\mathfrak{H}(A)-1$ by Lemma \ref{bound1}.

If $N$ is  $p$-soluble, then $R_p(A/N)=R_p(A)/N$. It means that $\overline{\mathrm{F}}_p(A/N)=\overline{\mathrm{F}}_p(A)/N$. Thus $h_\mathfrak{H}(A/N)=h_\mathfrak{H}(A)$, a contradiction.

It means that $R_p(G)=1$. Note that now $N$ is a simple non-abelian group. Since $N$ is a unique minimal normal subgroup of $G$, we see that  $N=\mathrm{F}^*(G)$. Now $h_\mathfrak{H}(G/N)=h_\mathfrak{H}(G)-1$. Therefore
$$ h_\mathfrak{H}(G)-1=h_\mathfrak{H}(G/N)\geq h_\mathfrak{H}(A/N)=h_\mathfrak{H}(A)-1.$$
Thus $h_\mathfrak{H}(G)\geq h_\mathfrak{H}(A)$, the contradiction.

 We proved that $\max\{h_\mathfrak{H}(A), h_\mathfrak{H}(B)\}= h_\mathfrak{H}(G)$.

 Let $G$ be a mutually permutable product of groups $A$ and $B$. If $A, B$ are $p$-soluble, then $G$ is $p$-soluble by $(1)$ and $(4)$ of Lemma \ref{MP}. Hence $\lambda_p(G)=\lambda_p(A)=\lambda_p(B)=0$. Now assume that at least one of subgroups $A, B$ is not $p$-soluble. Then  $G$ is not $p$-soluble by $(1)$ and $(4)$ of Lemma \ref{MP}. WLOG let $h_\mathfrak{H}(A)\geq h_\mathfrak{H}(B)$. Hence $A$ is not $p$-soluble. We proved that $h_\mathfrak{H}(A)=h_\mathfrak{H}(G)$. Note that $h_\mathfrak{H}(G)=\lambda_p(G)$, $h_\mathfrak{H}(A)=\lambda_p(A)$,  $h_\mathfrak{H}(B)= \lambda_p(B)$ if $B$ is not $p$-soluble   by Lemma  \ref{lambdap} and $0=\lambda_p(B)<1=h_\mathfrak{H}(B)\leq h_\mathfrak{H}(A)=\lambda_p(A)$ otherwise. Thus
$\max\{\lambda_p(A), \lambda_p(B)\}=\lambda_p(G)$.      \end{proof}

\section{Non-Frattini length}

The Frattini subgroup $\Phi(G)$ play an important role in the theory of classes of groups.  One of the useful properties of the Fitting subgroup of a soluble group is that it is strictly greater than the Frattini subgroup of the same group. Note that the generalized Fitting subgroup is non-trivial in every group but there are groups in which it coincides with the Frattini subgroup.
That is why the following length seems interesting.

\begin{defn}
Let  $1=G_0\leq G_1\leq\dots\leq G_{2h}=G$ be a shortest normal series in which for $i$ even $G_{i+1}/G_i\leq \Phi(G/G_i)$,
and for $i$ odd the factor $G_{i+1}/G_i$ is a (non-empty) direct product of simple
groups. Then $h=\tilde{h}(G)$ will be called the non-Frattini length of a group $G$.
\end{defn}

Note that if $G$ is a soluble group, then $\tilde{h}(G)=h(G)$.  Another reason that leads us to this length   is the generalization of the Fitting subgroup   $\tilde{\mathrm{F}}(G)$ introduced by P.~Schmid \cite{Schmid1972} and L.A. Shemetkov
 \cite[Definition~7.5]{f4} and defined by
 $$ \Phi(G)\subseteq \tilde{\mathrm{F}}(G) \textrm{ and
  } \tilde{\mathrm{F}}(G)/\Phi(G)=\mathrm{Soc}(G/\Phi(G)).$$
P. F\"orster \cite{Foerster1985} showed that $\tilde{\mathrm{F}}(G)$ can be defined by
 $$ \Phi(G)\subseteq \tilde{\mathrm{F}}(G) \textrm{ and
  } \tilde{\mathrm{F}}(G)/\Phi(G)=\mathrm{F}^*(G/\Phi(G)).$$
Let $\Phi$ and  $\tilde{\mathrm{F}}$ be functorials that assign $\Phi(G)$ and $\tilde{\mathrm{F}}(G)$  to every group $G$. Then $\tilde{\mathrm{F}}=\Phi\star\mathrm{F}^*$. It is well known that $\Phi$ satisfies $(F1)$ and $(F2)$.    Hence $\tilde{\mathrm{F}}$  satisfies $(F1)$ and $(F2)$ by Proposition \ref{length0}.

Note that $\Phi(G/\Phi(G))\simeq 1$. By analogy with the proof of Lemma \ref{lambdap} one can show that the non-Frattini length $\tilde{h}(G)$ of a group $G$ and  $h_{\mathrm{\tilde{F}}}(G)$ coincide for every group $G$. The following theorem shows connections between  the non-Frattini length and the generalized Fitting height.

\begin{thm}\label{len1}
 For any group $G$ holds $\tilde{h}(G)\leq h^*(G)\leq 2\tilde{h}(G)$.   There exists a group $H$ with $\tilde{h}(H)=n$ and $h^*(H)=2n$ for any natural $n$.
\end{thm}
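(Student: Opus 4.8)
The plan is to establish the two inequalities separately and then build the extremal example. For the lower bound $\tilde h(G)\le h^*(G)$, the natural route is to compare the two functorials directly. Since $\Phi$ satisfies $(F1)$ and $(F2)$, so does $\tilde{\mathrm F}=\Phi\star\mathrm F^*$ by Proposition~\ref{length0}, and the same holds for all iterates by Lemma~\ref{n}. The key observation is that $\mathrm F^*(G)\subseteq\tilde{\mathrm F}(G)$ for every group $G$ (indeed $\mathrm F^*(G)\Phi(G)/\Phi(G)\subseteq\mathrm F^*(G/\Phi(G))$ because $\mathrm F^*$ satisfies $(F1)$). From this one shows by induction that $\mathrm F^*_{(i)}(G)\subseteq\tilde{\mathrm F}_{(i)}(G)$ for all $i$: passing to $\bar G=G/\mathrm F^*_{(i)}(G)$ and using $\mathrm F^*(\bar G)\subseteq\tilde{\mathrm F}(\bar G)$ together with the induction hypothesis and the fact that the iterates are inverse images. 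Hence if $\mathrm F^*_{(h)}(G)=G$ then $\tilde{\mathrm F}_{(h)}(G)=G$, so $\tilde h(G)=h_{\tilde{\mathrm F}}(G)\le h^*(G)$.

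For the upper bound $h^*(G)\le 2\tilde h(G)$, the idea is to control one step of the generalized Fitting series by two steps of the $\tilde{\mathrm F}$-series. Set $\tilde h(G)=n$ and let $1=\tilde{\mathrm F}_{(0)}(G)\le\tilde{\mathrm F}_{(1)}(G)\le\dots\le\tilde{\mathrm F}_{(n)}(G)=G$. Between consecutive terms we have $\tilde{\mathrm F}_{(i+1)}(G)/\tilde{\mathrm F}_{(i)}(G)=\tilde{\mathrm F}(G/\tilde{\mathrm F}_{(i)}(G))$, which by definition of $\tilde{\mathrm F}$ has a normal $\Phi$-subgroup (namely $\Phi(G/\tilde{\mathrm F}_{(i)}(G))$) with quotient quasinilpotent. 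Since a normal subgroup contained in the Frattini subgroup is nilpotent, each such layer $\tilde{\mathrm F}_{(i+1)}(G)/\tilde{\mathrm F}_{(i)}(G)$ is an extension of a nilpotent group by a quasinilpotent group, hence has generalized Fitting height at most $2$. Now invoke the subadditivity of $h^*=h_{\mathrm F^*}$ along a normal series (Lemma~\ref{ineq}, applied repeatedly, using that $\mathrm F^*$ satisfies $(F1),(F2),(F3)$): $h^*(G)\le\sum_{i=0}^{n-1}h^*\bigl(\tilde{\mathrm F}_{(i+1)}(G)/\tilde{\mathrm F}_{(i)}(G)\bigr)\le 2n=2\tilde h(G)$.

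For the extremal example I would iterate a single well-chosen building block. The standard choice is a nonabelian simple group $S$ sitting over a Frattini layer: take a perfect central extension $\hat S$ of $S$ by a cyclic group (so that the center lies in $\Phi(\hat S)$), or more robustly a group $W$ with $W/\Phi(W)\cong S$ and $\Phi(W)$ a nontrivial $p$-group that is \emph{not} complemented, engineered so that $\mathrm F^*(W)=\Phi(W)$ has order a prime (then $h^*(W)=2$ while $\tilde h(W)=1$). Then form $H=H_n$ by stacking $n$ such blocks — concretely a group with a normal series whose odd layers are copies of $S$ and whose even layers are noncomplemented $p$-chief factors, e.g.\ iterated wreath-type or cohomological constructions. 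One checks $\tilde h(H)=n$ from the displayed normal series (each simple layer is not inside the Frattini subgroup of the relevant quotient, each $p$-layer is), and $h^*(H)=2n$ because passing to $\mathrm F^*$ each time only captures one noncomplemented abelian layer at a time: $\mathrm F^*_{(1)}(H)$ is the bottom $p$-layer, $\mathrm F^*_{(2)}(H)$ adds the simple layer above it (it becomes quasinilpotent mod the radical), and so on, consuming two $\mathrm F^*$-steps per block.

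The main obstacle will be the construction and verification of the extremal group: one must produce a $p$-layer that is genuinely Frattini (not complemented) sitting below a simple section in such a way that when one computes $\mathrm F^*$ of the quotient the layer is \emph{not} absorbed prematurely, and then verify that the stacking does not accidentally collapse any layers. This requires a careful choice of the extension cocycles (existence of suitable non-split extensions of $S$, or of $S$-modules with the needed cohomology), together with the computation that $C_G^*$ of each chief factor behaves as expected so that the generalized Fitting series has exactly the claimed length. The two inequalities themselves are routine given the functorial machinery already developed in Sections~2--3.
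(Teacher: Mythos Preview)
Your treatment of the two inequalities is correct and essentially matches the paper: both argue from $\mathrm{F}^*(G)\le\tilde{\mathrm F}(G)\le\mathrm{F}^*_{(2)}(G)$, and your subadditivity route for the upper bound is just a repackaging of the paper's direct inclusion $\tilde{\mathrm F}_{(n)}(G)\le\mathrm{F}^*_{(2n)}(G)$.

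The gap is in the extremal construction. Your first suggestion, a perfect central extension $\hat S$ of a simple group, does \emph{not} work: such a group is quasisimple, hence quasinilpotent, so $\mathrm{F}^*(\hat S)=\hat S$ and $h^*(\hat S)=1$, not $2$. Your ``more robust'' alternative is the right shape, but you have not explained how to produce a $W$ with $W/\Phi(W)\cong S$ and $\mathrm{F}^*(W)=\Phi(W)$; the obstruction is precisely that if $\Phi(W)$ happens to be central in $\mathrm{F}^*(W)$ then $\mathrm{F}^*(W)$ swallows the simple layer. The paper resolves this by invoking Griess's theorem: for any group $K_1$ there is a \emph{faithful} Frattini $\mathbb{F}_pK_1$-module, giving an extension $K_2$ with $K_2/\Phi(K_2)\cong K_1$ and $C_{K_1}(\Phi(K_2))=1$. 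Faithfulness is exactly what forces $\Phi(K_2)\not\le Z(\mathrm{F}^*(K_2))$, and since a quasinilpotent group has its Frattini subgroup central (Huppert--Blackburn), this rules out $\mathrm{F}^*(K_2)\supsetneq\Phi(K_2)$.

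Equally important, the paper does not simply ``stack'' copies of a fixed block. At each stage it first forms $K_1=\mathbb{A}_5\wr K$ (so that $K_1$ has a \emph{unique} minimal normal subgroup, the nonabelian base) and only then takes the Frattini extension $K_2=\mathbf{f}(K)$. The wreath step is what guarantees that $\mathrm{F}^*(K_2)/\Phi(K_2)$ cannot be anything other than this base, so that $h^*(\mathbf{f}(K))=h^*(K)+2$ and $\tilde h(\mathbf{f}(K))=\tilde h(K)+1$ hold at every iteration. Your sketch of ``iterated wreath-type or cohomological constructions'' is pointing in the right direction, but without the faithful-module ingredient and the control of the socle at each level you cannot complete the verification you yourself flag as the main obstacle.
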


\begin{proof}%[Proof of Proposition \ref{len1}]
   Since $\Phi(G)$ and $\mathrm{Soc}(G/\Phi(G))$ are quasinilpotent, we see that $\mathrm{F}^*(G)\leq\mathrm{\tilde F}(G)\leq \mathrm{F}_{(2)}^*(G)$.  Now $\mathrm{F}_{(n)}^*(G)\leq\mathrm{\tilde F}_{(n)}(G)\leq \mathrm{F}_{(2n)}^*(G)$. Hence if $\mathrm{\tilde F}_{(n)}(G)=G$, then $\mathrm{F}_{(n)}^*(G)\leq G$ and $\mathrm{F}_{(2n)}^*(G)=G$. It means $\tilde{h}(G)\leq h^*(G)\leq 2\tilde{h}(G)$.

Let $K$ be a group, $K_1$ be isomorphic to the regular wreath product of  $\mathbb{A}_5$ and $K$. Note that the base $B$  of it is the unique minimal normal subgroup of $K_1$ and non-abelian. According to \cite{Griess1978}, there is a
 Frattini $\mathbb{F}_3K_1$-module $A$ which is faithful for $K_1$ and a Frattini extension  $A\rightarrowtail K_2\twoheadrightarrow K_1$
such that $A\stackrel {K_1}{\simeq} \Phi(K_2)$ and $K_2/\Phi(K_2)\simeq K_1$.

Let denote $K_2$ by $\mathbf{f}(K)$.
Now $\mathbf{f}(K)/\mathrm{\tilde F}(\mathbf{f}(K))\simeq K$. From the definition of $h_{\mathrm{\tilde F}}=\tilde{h}$ it follows that $\tilde{h}(\mathbf{f}(K))=\tilde{h}(K)+1$.

Note that $\Phi(\mathbf{f}(K))\subseteq \mathrm{F}^*(\mathbf{f}(K))$. Assume that $\Phi(\mathbf{f}(K))\neq \mathrm{F}^*(\mathbf{f}(K))$. It means that $\mathrm{F}^*(\mathbf{f}(K))=\mathrm{\tilde F}(\mathbf{f}(K))$ is quasinilpotent. By \cite[X, Theorem 13.8]{19} it follows that $\Phi(\mathbf{f}(K))\subseteq \mathrm{Z}(\mathrm{F}^*(\mathbf{f}(K)))$. It means that $1<B\leq C_{K_1}(A)$. Thus $A$ is not faithful, a contradiction.

Thus  $\Phi(\mathbf{f}(K))= \mathrm{F}^*(\mathbf{f}(K))$ and $\mathbf{f}(K)/\mathrm{F}^*(\mathbf{f}(K))\simeq K_1$. Since $K_1$ has a unique minimal normal subgroup $B$ and it is non-abelian, we see that $\mathrm{F}^*(K_1)=B$. It means that $\mathbf{f}(K)/\mathrm{F}^*_{(2)}(\mathbf{f}(K))\simeq K$. From the definition of $h^*$ it follows that $h^*(\mathbf{f}(K))=h^*(K)+2$.

As usual, let $\mathbf{f}^{(1)}(K)=\mathbf{f}(K)$ and $\mathbf{f}^{(i+1)}(K)=\mathbf{f}(\mathbf{f}^{(i)}(K))$. Then   $\tilde{h}(\mathbf{f}^{(n)}(1))=n$ and $h^*(\mathbf{f}^{(n)}(1))=2n$ for any natural $n$.
\end{proof}

The following proposition directly follows from Theorem \ref{len3}.

\begin{prop}
 Let a group $G=\langle A_i\mid 1\leq i\leq n\rangle$ be the join of its subnormal subgroups $A_i$. Then $\tilde{h}(G)\leq \max\{\tilde{h}(A_i)\mid 1\leq i\leq n\}$.
 \end{prop}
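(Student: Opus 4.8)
The plan is to specialise Theorem~\ref{len3}(2) to the functorial $\gamma=\tilde{\mathrm{F}}$. First I would confirm that $\tilde{\mathrm{F}}$ satisfies the hypotheses of that theorem. By the discussion preceding this proposition, $\tilde{\mathrm{F}}=\Phi\star\mathrm{F}^*$, and since $\Phi$ and $\mathrm{F}^*$ both satisfy $(F1)$ and $(F2)$, Proposition~\ref{length0} gives that $\tilde{\mathrm{F}}$ satisfies $(F1)$ and $(F2)$. It remains to observe that $\tilde{\mathrm{F}}(H)>1$ for every nontrivial group $H$: this holds because $\Phi(H)\leq\mathrm{F}^*(H)$, so by $(F1)$ for $\mathrm{F}^*$ we get $\mathrm{F}^*(H)/\Phi(H)\leq\mathrm{F}^*(H/\Phi(H))=\tilde{\mathrm{F}}(H)/\Phi(H)$, whence $\mathrm{F}^*(H)\leq\tilde{\mathrm{F}}(H)$, and $\mathrm{F}^*(H)\neq 1$ whenever $H\neq 1$. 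In particular $h_{\tilde{\mathrm{F}}}(H)<\infty$ for every finite group $H$, so both sides of the claimed inequality are finite.

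Next I would invoke the identification $\tilde{h}(G)=h_{\tilde{\mathrm{F}}}(G)$, valid for every group $G$ and established earlier by analogy with the proof of Lemma~\ref{lambdap}. Applying Theorem~\ref{len3}(2) with $\gamma=\tilde{\mathrm{F}}$ to the join $G=\langle A_i\mid 1\leq i\leq n\rangle$ of the subnormal subgroups $A_i$ yields $h_{\tilde{\mathrm{F}}}(G)\leq\max\{h_{\tilde{\mathrm{F}}}(A_i)\mid 1\leq i\leq n\}$, which translates at once into $\tilde{h}(G)\leq\max\{\tilde{h}(A_i)\mid 1\leq i\leq n\}$.

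I expect no serious obstacle here; the content is entirely a matter of checking that $\tilde{\mathrm{F}}$ fits the functorial framework already developed, and the paper's own phrasing (``directly follows from Theorem~\ref{len3}'') reflects this. The one point worth flagging is that, unlike the case $\gamma=\mathrm{F}^*$, one should \emph{not} expect the reverse inequality: $\tilde{\mathrm{F}}$ does not satisfy $(F3)$, since $\Phi$ fails $(F3)$ (Remark~\ref{rem21}(4)) and this defect is not repaired by forming the upper product with $\mathrm{F}^*$. Hence only the ``$\leq$'' half of Theorem~\ref{len3}(2) is available, which is exactly what the proposition asserts.
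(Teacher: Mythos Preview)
Your proposal is correct and matches the paper's approach exactly: the paper states that the proposition ``directly follows from Theorem~\ref{len3}'', and you have spelled out precisely the verifications needed to apply that theorem with $\gamma=\tilde{\mathrm{F}}$. Your closing remark about the failure of $(F3)$ and the absence of the reverse inequality is also in line with the paper, which exhibits an explicit example witnessing this just below the proposition.
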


One of the main differences between the non-Frattini length and the generalized Fitting height is that the non-Frattini length of a normal subgroup can be greater than the non-Frattini length of a group.

\begin{example}
Let $E\simeq \mathbb{A}_5$. There is an $\mathbb{F}_5E$-module $V$ such
 that $R=Rad(V)$ is a faithful irreducible $\mathbb{F}_5E$-module and $V/R$ is
 an irreducible trivial $\mathbb{F}_5E$-module (how to construct such module, for example, see \cite{Murashka2022}).  Let $G=V\leftthreetimes E$. Now $\Phi(G)=R$ by \cite[B, Lemma 3.14]{s8}. Note that $G/\Phi(G)=G/R\simeq Z_5\times E$. So $\mathrm{\tilde{F}}(G)=G$ and  $\tilde{h}(G)=1$. Note that $G=V(RE)$ where $V$ and $RE$ are normal subgroups of $G$. Since $ V$ is abelian, we see that $\tilde{h}(V)=1$. Note that $R$ is a unique minimal normal subgroup of $RE$ and $\Phi(RE)=1$. It means that  $\mathrm{\tilde{F}}(RE)=R$ and $\tilde{h}(RE)=2$. Thus $\tilde{h}(G)<\max\{\tilde{h}(V), \tilde{h}(RE)\}$ and $\tilde{\mathrm{F}}$ does not satisfy $(F3)$.
\end{example}

 Recall \cite[Definition 4.1.1]{PFG} that a group $G$ is called a totally permutable product of its subgroups $A$ and $B$ if $G=AB$ and every subgroup of $A$ permutes with every subgroup of $B$.

\begin{thm}\label{nonF}
  Let a group $G=AB$ be a totally permutable product of subgroups $A$ and $B$. Then  $$\max\{\tilde{h}(A), \tilde{h}(B)\}-1\leq \tilde{h}(G)\leq \max\{\tilde{h}(A), \tilde{h}(B)\}+1. $$
\end{thm}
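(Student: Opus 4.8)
We work throughout with the functorial $\gamma=\tilde{\mathrm F}=\Phi\star\mathrm F^*$. As recorded before the theorem, $\gamma$ satisfies $(F1)$ and $(F2)$ by Proposition~\ref{length0}, $\gamma(H)>1$ for every $H\neq1$ (because $\Phi(H)<H$ and $\mathrm F^*$ is never trivial), and $h_\gamma=\tilde h$. Set $\tilde{\mathfrak F}=(G\mid\tilde h(G)\leq1)=(G\mid G/\Phi(G)\text{ is quasinilpotent})$; this is a formation, and it contains every nilpotent group (in particular every abelian group). A totally permutable product $G=AB$ is in particular a mutually permutable product, so Lemma~\ref{MP} applies, and $G/N$ is again a totally permutable product of $AN/N$ and $BN/N$ for every $N\trianglelefteq G$ (\cite[Chapter 4]{PFG}); moreover a totally permutable product admits the sharpening of Lemma~\ref{MP}(3) in which a minimal normal subgroup contained in one factor is \emph{always} centralised by the other factor, with no restriction to the non-cyclic case. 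The decisive difference from the setting of Theorem~\ref{Main1} is that $\tilde{\mathrm F}$ does \emph{not} satisfy $(F3)$ — this is shown by the Example above — so Lemmas~\ref{bound}, \ref{bound1} and~\ref{minusone} are unavailable, and it is exactly to make up for this that the hypothesis is strengthened from mutually to totally permutable.

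\emph{Upper bound.} The plan is first to prove the base case, \emph{a totally permutable product $G=AB$ of two $\tilde{\mathrm F}$-groups satisfies $\tilde h(G)\leq2$}, imitating Lemmas~\ref{quasi2} and~\ref{Hp}. For a minimal counterexample $G$, passing to $G/\Phi(G)$ — again a totally permutable product, now of quotients of $\tilde{\mathrm F}$-groups — reduces to a $\Phi$-free counterexample with a unique minimal normal subgroup $N$; after a reduction via Lemma~\ref{MP}(4) we may put $N$ inside the first factor, and the alternatives ``$N$ meets the second factor trivially'' and ``$N$ lies in the intersection'' are then disposed of as in Lemma~\ref{quasi2}, using the sharpened Lemma~\ref{MP}(3) and the $\mathfrak X$-hypercentre/inneriser results of Lemmas~\ref{lem4} and~\ref{form} to conclude $N\leq\mathrm Z_{(\mathfrak N^*)^2}(G/\Phi(G))$. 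With the base case granted, consider $C=\langle A^{\tilde{\mathfrak F}},B^{\tilde{\mathfrak F}}\rangle^G$: since abelian groups are $\tilde{\mathfrak F}$-groups we have $A^{\tilde{\mathfrak F}}\leq A'$ and $B^{\tilde{\mathfrak F}}\leq B'$, and $A',B'$ are subnormal in $G$ by Lemma~\ref{MP}(5); hence $C$ is a join of subnormal subgroups, so $\tilde h(C)\leq\max\{\tilde h(A^{\tilde{\mathfrak F}}),\tilde h(B^{\tilde{\mathfrak F}})\}$ by Theorem~\ref{len3}(2), while $G/C$ is a totally permutable product of the $\tilde{\mathfrak F}$-groups $AC/C$ and $BC/C$, so $\tilde h(G/C)\leq2$ by the base case; Lemma~\ref{ineq} then yields $\tilde h(G)\leq\tilde h(C)+2$. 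The step I expect to be the real obstacle is the remaining inequality $\tilde h(A^{\tilde{\mathfrak F}})\leq\tilde h(A)-1$ (and likewise for $B$), the analogue of Lemma~\ref{minusone}, whose usual proof leans on $(F3)$: I would instead extract it from the internal structure of the totally permutable product, exploiting that $A^{\tilde{\mathfrak F}}$ is subnormal in $G$ and that totally permutable products are stable under forming $G/C$. This gives $\tilde h(G)\leq\max\{\tilde h(A),\tilde h(B)\}+1$.

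\emph{Lower bound.} Here I would induct on $|G|$, assuming WLOG $\tilde h(A)=m:=\max\{\tilde h(A),\tilde h(B)\}$ and taking a counterexample $G$ of least order, so $\tilde h(G)\leq m-2$. For any minimal normal $M\trianglelefteq G$ the quotient $G/M$ is a smaller totally permutable product, so by induction $\tilde h(G/M)\geq\max\{\tilde h(AM/M),\tilde h(BM/M)\}-1$, and $\tilde h(G)\geq\tilde h(G/M)$ by Lemma~\ref{ineq}; it follows that every minimal normal subgroup of $G$ must lie in $A$ (else $AM/M\cong A$ and $\tilde h(G)\geq m-1$) and must satisfy $\tilde h(A/M)=m-1$. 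So, by Lemma~\ref{MP}(4), there is a minimal normal $N\leq A$ with $\tilde h(A/N)=m-1$; being characteristically simple, $N$ is quasinilpotent, hence $N\leq\mathrm F^*(G)\leq\tilde{\mathrm F}(G)$ and $\tilde h(G)=1+\tilde h(G/\tilde{\mathrm F}(G))$. To reach a contradiction it then suffices that $\tilde h(G/\tilde{\mathrm F}(G))\geq m-2$, and since $G/\tilde{\mathrm F}(G)$ is a totally permutable product of $A\tilde{\mathrm F}(G)/\tilde{\mathrm F}(G)\cong A/(A\cap\tilde{\mathrm F}(G))$ with $B\tilde{\mathrm F}(G)/\tilde{\mathrm F}(G)$, by induction this reduces to showing that $A\cap\tilde{\mathrm F}(G)$ lowers $\tilde h(A)$ by at most one — which is again a substitute for $(F3)$ and is where the totally permutable structure (the sharpened Lemma~\ref{MP}(3): $B$ centralises $N$, together with $\mathrm F^*$ being honest on the subnormal subgroup $A'$) is used. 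In summary, both halves hinge on the same point: the failure of $(F3)$ for $\tilde{\mathrm F}$ — which is precisely why the right-hand bound here is $\max-1$ rather than the $\max$ of Theorem~\ref{Main1}(1) — is compensated by the rigidity of totally, as opposed to merely mutually, permutable products.
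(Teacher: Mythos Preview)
Your proposal contains genuine gaps that you yourself flag: the inequality $\tilde h(A^{\tilde{\mathfrak F}})\leq\tilde h(A)-1$ for the upper bound, and the claim that quotienting $A$ by $A\cap\tilde{\mathrm F}(G)$ lowers $\tilde h(A)$ by at most one for the lower bound. These are not minor technicalities but precisely the $(F3)$-type statements whose failure for $\tilde{\mathrm F}$ is exhibited in the Example preceding the theorem, and you give no concrete mechanism for recovering them. The first inequality is a statement about $A$ alone, so it is unclear how the ambient product structure of $G$ could help; your suggestion to ``extract it from the internal structure of the totally permutable product'' via the subnormality of $A^{\tilde{\mathfrak F}}$ in $G$ points only toward Theorem~\ref{len3}(2), which bounds $\tilde h(C)$ above by $\tilde h(A^{\tilde{\mathfrak F}})$, not the other way round. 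For the second, knowing $A\cap\tilde{\mathrm F}(G)\leq\tilde{\mathrm F}(G)$ with $\tilde h(\tilde{\mathrm F}(G))=1$ is useless, since the Example shows $\tilde h$ can jump on passage to subgroups.

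The paper circumvents all of this by abandoning the attempt to run the $\tilde{\mathrm F}$-series directly and instead working with $\mathrm F^*_{(2)}$, which \emph{does} satisfy $(F3)$. The argument is: since $A\cap B\leq\mathrm F(G)$ \cite[Proposition~4.1.16]{PFG}, after passing to $\overline G=G/\mathrm F^*(G)$ the images $\overline A,\overline B$ intersect trivially, and then $[\overline A,\overline B]\leq\mathrm F(\overline G)$ \cite[Lemma~4.2.2]{PFG}; hence $G/\mathrm F^*_{(2)}(G)$ is the \emph{direct} product of the images of $A$ and $B$. The formation $(\mathfrak N^*)^2$ satisfies the hypotheses of \cite[Proposition~5.3.9]{PFG} (using \cite[Theorems~5.2.1 and 5.2.2]{PFG} together with Lemma~\ref{form}), which yields $A\cap\mathrm F^*_{(2)}(G)=\mathrm F^*_{(2)}(A)$ and similarly for $B$; so in fact $G/\mathrm F^*_{(2)}(G)\simeq(A/\mathrm F^*_{(2)}(A))\times(B/\mathrm F^*_{(2)}(B))$, and Theorem~\ref{len3}(1) computes its $\tilde h$ as a maximum. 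Finally the sandwich $\tilde{\mathrm F}(H)\leq\mathrm F^*_{(2)}(H)\leq\tilde{\mathrm F}_{(2)}(H)$ together with Lemma~\ref{ineq} forces $\tilde h(H)-\tilde h(H/\mathrm F^*_{(2)}(H))\in\{1,2\}$ for every $H\neq1$, and both inequalities drop out of a one-line numerical comparison. The key idea you are missing is to trade the badly behaved functorial $\tilde{\mathrm F}$ for the well-behaved $\mathrm F^*_{(2)}$, accepting a controlled error of~$1$ in each direction rather than trying to repair $(F3)$ by hand.
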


\begin{proof}
If $A=1$ or $B=1$, then  $\max\{\tilde{h}(A), \tilde{h}(B)\}=\tilde{h}(G)$. Assume that $A, B\neq 1$.

According to \cite[Proposition 4.1.16]{PFG} $A\cap B\leq \mathrm{F}(G)$. Hence $A\cap B\leq \mathrm{F}^*(G)$. Now $\overline{G}=G/\mathrm{F}^*(G)$ is a totally permutable product of $\overline{A}=A\mathrm{F}^*(G)/\mathrm{F}^*(G)$ and $\overline{B}=B\mathrm{F}^*(G)/\mathrm{F}^*(G)$ by \cite[Corollary 4.1.11]{PFG}. Note that  $\overline{A}\cap \overline{B}\simeq 1$. According to \cite[Lemma 4.2.2]{PFG} $[\overline{A}, \overline{B}]\leq \mathrm{F}(\overline{G})$. So $[\overline{A}, \overline{B}]\leq \mathrm{F}^*(\overline{G})$. It means that
   $$ \overline{G}/\mathrm{F}^*(\overline{G})=(\overline{A}\mathrm{F}^*(\overline{G})/\mathrm{F}^*(\overline{G}))\times (\overline{B}\mathrm{F}^*(\overline{G})/\mathrm{F}^*(\overline{G})). $$

Note that for the formation $\mathfrak{U}$ of all supersoluble groups we have $\mathfrak{U}\subset \mathfrak{N}^2\subset(\mathfrak{N}^*)^2$. Hence if $H=H_1H_2$ is a product of totally permutable $\mathfrak{(\mathfrak{N}^*)}^2$-subgroups $H_1$ and $H_2$, then $H\in\mathfrak{(\mathfrak{N}^*)}^2$ by \cite[Theorem 5.2.1]{PFG}. Analyzing the proof of \cite[Theorem 5.2.2]{PFG} we see that this theorem is true not only for saturated formation, but for formations $\mathfrak{F}=(G\mid G=\mathrm{Z}_\mathfrak{F}(G))$. In particular, it is true for  $(\mathfrak{N}^*)^2$. Thus  if $H=H_1H_2\in\mathfrak{(\mathfrak{N}^*)}^2$ is a product of totally permutable subgroups $H_1$ and $H_2$, then $H_1, H_2\in\mathfrak{(\mathfrak{N}^*)}^2$. Now $(\mathfrak{N}^*)^2$ satisfies conditions of \cite[Proposition 5.3.9]{PFG}.

Therefore $A\cap \mathrm{F}^*_{(2)}(G)=\mathrm{F}^*_{(2)}(A)$ and $B\cap \mathrm{F}^*_{(2)}(G)=\mathrm{F}^*_{(2)}(B)$. Note that

$$\overline{A}\mathrm{F}^*(\overline{G})/\mathrm{F}^*(\overline{G})\simeq A\mathrm{F}^*_{(2)}(G)/\mathrm{F}^*_{(2)}(G)\simeq A/\mathrm{F}^*_{(2)}(A). $$

By analogy $ \overline{B}\mathrm{F}^*(\overline{G})/\mathrm{F}^*(\overline{G})\simeq B/\mathrm{F}^*_{(2)}(B)$. Hence

$$G/\mathrm{F}^*_{(2)}(G)\simeq (A/\mathrm{F}^*_{(2)}(A))\times (B/\mathrm{F}^*_{(2)}(B)).$$

  By Theorem \ref{len3} and $\tilde{h}=h_{\tilde{\mathrm{F}}}$  we have $\tilde{h}(G/\mathrm{F}^*_{(2)}(G))=
  \max\{\tilde{h}(A/\mathrm{F}^*_{(2)}(A)),\tilde{h}(B/\mathrm{F}^*_{(2)}(B))\}.$

From $\tilde{\mathrm{F}}(H)\leq \mathrm{F}^*_{(2)}(H)\leq\tilde{\mathrm{F}}_{(2)}(H)$ and Lemma \ref{ineq} it follows that for any group $H\neq 1$ holds
$$\tilde{h}(H)-1=\tilde{h}(H/\tilde{\mathrm{F}}(H))\geq \tilde{h}(H/\mathrm{F}^*_{(2)}(H)) \geq \tilde{h}(H/\tilde{\mathrm{F}}_{(2)}(H))\geq\tilde{h}(H)-2.$$
Therefore
  $$\{\tilde{h}(G)-\tilde{h}(G/\mathrm{F}^*_{(2)}(G)),
 \tilde{h}(A)-\tilde{h}(A/\mathrm{F}^*_{(2)}(A)),
 \tilde{h}(B)-\tilde{h}(B/\mathrm{F}^*_{(2)}(B))\}\subseteq \{1, 2\}.$$
Thus $\max\{\tilde{h}(A), \tilde{h}(B)\}-1\leq \tilde{h}(G)\leq \max\{\tilde{h}(A), \tilde{h}(B)\}+1$.
\end{proof}

While proving Theorem \ref{nonF} we were not able to answer  the following question:

\begin{quest}
   Let a group $G=AB$ be a totally permutable product of subgroups $A$ and $B$. Is $\max\{\tilde{h}(A), \tilde{h}(B)\}\leq \tilde{h}(G)$?
\end{quest}

The following question seems interesting

\begin{quest}
Do there exists a constant $h$ with $|\max\{\tilde{h}(A), \tilde{h}(B)\}- \tilde{h}(G)|\leq h$
  for any  mutually permutable product $G=AB$   of subgroups $A$ and $B$?
\end{quest}

D.A. Towers \cite{TOWERS2017} defined and studied analogues of $\mathrm{F}^*(G)$ and $\mathrm{\tilde F}(G)$ for  Lie algebras. Using these subgroups and the radical (of a Lie algebra) one can introduce the generalized Fitting height, the non-soluble length and the non-Frattini length of a (finite dimension) Lie algebra.

\begin{quest}
Estimate the generalized Fitting height, the non-soluble length and the non-Frattini length of a $($finite dimension$)$ Lie algebra that is the sum of its two subalgebras $($ideals, subideals, mutually or totally permutable subalgebras$)$.
\end{quest}

\bibliographystyle{plain}
\bibliography{length}

\end{document}